\theoremstyle{plain}
\newtheorem{thm}{Theorem}[section]
\newtheorem{prop}[thm]{Proposition}
\newtheorem{cor}[thm]{Corollary}
\newtheorem{lem}[thm]{Lemma}
\theoremstyle{definition}
\theoremstyle{remark}
\newtheorem{rmk}{\textnormal{{\bf Remark}}}[section]
\numberwithin{equation}{section}
\newcommand\norm[1]{\lVert#1\rVert}
\newcommand\eps{\varepsilon}
\newcommand\ep{\epsilon}
\newcommand\om{\omega}
\begin{document}

\title{Sharpened dynamics alternative and its $C^1$-robustness for strongly monotone discrete dynamical systems\thanks{Supported by NSF of China No.11825106, 11771414 and 11971232.}}

\setlength{\baselineskip}{16pt}

\author {
Yi Wang and Jinxiang Yao\thanks{Corresponding author: jxyao@mail.ustc.edu.cn (J. Yao).}
\\[2mm]
School of Mathematical Sciences\\
University of Science and Technology of China\\
Hefei, Anhui, 230026, P. R. China
}
\date{}
\maketitle

\begin{abstract}
For strongly monotone dynamical systems, the dynamics alternative for smooth discrete-time systems turns out to be a perfect analogy of the celebrated Hirsch's limit-set dichotomy for continuous-time semiflows. In this paper, we first present a sharpened dynamics alternative for $C^1$-smooth strongly monotone discrete-time dissipative system $\{F_0^n\}_{n\in \mathbb{N}}$ (with an attractor $A$), which concludes that there is a positive integer $m$ such that any orbit is either manifestly unstable; or asymptotic to a linearly stable cycle whose minimal period is bounded by $m$. Furthermore, we show the $C^1$-robustness of the sharpened dynamics alternative, that is, for any $C^1$-perturbed system $\{F_\ep^n\}_{n\in \mathbb{N}}$ ($F_\ep$ not necessarily monotone), any orbit initiated nearby $A$ will admit the sharpened dynamics alternative with the same $m$. The improved generic convergence to cycles for the $C^1$-system $\{F_0^n\}_{n\in \mathbb{N}}$, as well as for the perturbed system $\{F_\ep^n\}_{n\in \mathbb{N}}$, is thus obtained as by-products of the sharpened dynamics alternative and its $C^1$-robustness.
The results are applied to nonlocal $C^1$-perturbations of a time-periodic parabolic
equations and give typical convergence to periodic solutions whose minimal periods are uniformly bounded.

\par

\vskip 2mm
\textbf{Keywords}: Sharpened dynamics alternative; Improved generic convergence; Linearly stable cycels; Lyapunov exponents; $C^1$-smoothness; $C^1$-robustness; (Extended) Exponential separation; Periodic parabolic equations.
\end{abstract}

\section{Introduction}

The theory of monotone dynamical systems grew out of the
series of groundwork of M. W. Hirsch (\cite{H84,H88a} and \cite{H82,H85,H88b,H90,H89,H91}) and Matano \cite{M79,M86}. Over three decades since its development, the theory and applications have undergone extensive investigations and continue to expand. Large quantities of mathematical models
of ordinary, functional and partial differential equations or difference equations can generate
monotone dynamical systems. We refer to \cite{HS05,P02,S95,S17,ShYi98,Chue02,JZ05,FWW19} (and references therein) for details.

For continuous strongly monotone semiflows, the central and signature result is the so called Hirsch's generic convergence theorem, concluding that generic precompact orbits approach a set of equilibria. For this purpose, Hirsch introduced a fundamental building block of the theory, \emph{the Limit-Set Dichotomy}, which asserts that for a strongly monotone semiflow with compact orbit closures,

\vskip 2mm
{\it If $x<y$, then either $\omega(x)\ll \omega(y)$ or $\omega(x)=\omega(y)\subset E.$}

\vskip 2mm

\noindent Here, $\omega(x)$ and $\omega(y)$ are the $\om$-limit set of $x$ and $y$, respectively; $E$ denotes the set of equilibria; and $\omega(x)\ll \omega(y)$ means that $u\ll v$ for any $u\in\omega(x)$ and $v\in\omega(y)$.
As a matter of fact, almost all of the important results (including the generic convergence theorem) in the theory of monotone semiflows follow from this deceptively simple result. Later on, motivated by earlier work of Pol\'{a}\v{c}ik \cite{P89}, Smith and Thieme \cite{ST91} improved the limit set dichotomy and generic convergence for $C^1$-smooth strongly monotone semiflows (see also in \cite{HS05}).

However, it is unfortunate that Hirsch's Limit-Set Dichotomy fails (see \cite{HS05} or \cite[Section 2, p.385-386]{HS05maps}) for strongly monotone discrete-time systems. Hence, there is no priori information
on the structure of limit sets of typical trajectories. This turns out to be a major significant difference between semiflows and discrete-time systems. Although certain weak Limit-Set Dichotomy was mentioned for strongly monotone discrete systems in the literatures (see e.g., \cite{Ta92,DH91}), it is still unknown whether those weak versions can be viewed as an effective substitute for Hirsch's Limit-Set Dichotomy.
As a consequence, for strongly monotone discrete-time systems (mappings), there is no result analogous to Hirsch's generic convergence theorem up to now unless certain smoothness assumption is imposed.

Pol\'{a}\v{c}ik and Tere\v{s}\v{c}\'{a}k \cite{PT92} first proved that the
{\it generic convergence to cycles} occurs provided that the mapping $F$ is of class $C^{1,\alpha}$ (i.e., $F$ is a $C^1$-map with a locally $\alpha$-H\"{o}lder derivative $DF$, $\alpha\in (0,1]$). Here, a cycle means a periodic orbit of $F$. In \cite{PT92}, a critical insight for the inherent structure of strongly monotone discrete-time systems is the following statement (we now call it as {\it dynamics alternative}), which states (see \cite[Theorem 4.1]{PT92}):
For any $x$ with a relatively compact orbit, {\it either

{\rm (a)} the $\omega$-limit set $\omega(x)$ is a linearly stable cycle; or,

{\rm (b)} there exists $\delta>0$ such that for any $y\in X$ satisfying $y < x$ or $y > x$,  \begin{equation}\label{E:unst}
{\limsup_{n \to +\infty}||{F}^{n}x-{F}^{n}y||\geq\delta}.
\end{equation}}
\noindent Here, a cycle is called linearly stable if the spectral radius of the derivative $DF^p$ along the cycle (of minimal period $p$) is no more than 1 (see Section \ref{S:Not-Pre}). Pol\'{a}\v{c}ik and Tere\v{s}\v{c}\'{a}k \cite{PT92} discovered the alternative by classifying such scenario in terms of the sign of the principal Lyapunov exponent, as well as the exponential separation along $\omega(x)$ (see \cite{PT93,Mi91,JS08}) and
the idea of construction of stable manifolds in the so called Pesin's Theory. So, the additional assumptions that $F$ is ${C}^{1,\alpha}$ and injective on $\omega(x)$ cannot be dropped in \cite{PT92}.

The dynamics alternative plays a very crucial role in the study of generic behavior for smooth strongly monotone discrete-time systems. It deserves to point out that in discrete-time systems it is actually {\it a perfect analog of Hirsch's Limit-Set Dichotomy}. In our previous work \cite{WY20-1}, the present authors proved the dynamics alternative for $C^1$-smooth discrete-time systems (i.e., $F\in C^1$) by improving Tere\v{s}\v{c}\'{a}k's extended exponential separation Theorem (\cite[Theorem 2.1]{T94}). The generic convergence to cycles for $C^1$-smooth systems (see \cite[Corollary 2.2]{WY20-1}) is thus obtained as a by-product of the dynamics alternative.

In the present paper, we will focus on the sharpened versions of dynamics alternative for $C^1$-smooth discrete systems (see Theorem A), as well as their robustness for $C^1$-perturbed systems (see  Theorem C). To be more precise, we formulate some standing hypotheses:
\vskip 2mm

\noindent \textbf{(H1)} $(X,C)$ is a strongly ordered Banach Space.
\vskip 1mm

\noindent \textbf{(H2)} $F_{0}:X \to X$ is a compact ${C}^{1}$-map, such that for any $x\in X$, the Fr\'echet derivative $DF_{0}(x)$

\quad\  is a strongly positive operator, i.e., $DF_{0}(x)v\gg0$ whenever $v>0$.
\vskip 2mm

We have the following sharpened dynamics alternative for $C^1$-smooth mapping $F_0$:

\newtheorem*{thma}{\textnormal{\textbf{Theorem A}}}
\begin{thma}[Sharpened $C^1$-dynamics alternative]
\emph{Assume that \textnormal{(H1)-(H2)} hold. Assume also $F_{0}$ is pointwise dissipative. Then there is an integer $m>0$ such that, for any $x\in X$, either}

\textnormal{(a)} \emph{$\omega(x,F_{0})$ is a linearly stable cycle of minimal period at most $m$; or,}

\textnormal{(b)} \emph{there is a constant $\delta>$ 0 such that, for any $y \in X$ satisfying $y < x$ or $y > x$,}
$$\mathop{\limsup}\limits_{n \to +\infty}\|F_{0}^{n}x-F_{0}^{n}y\|\geq\delta.$$
\end{thma}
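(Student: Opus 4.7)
The plan is to first apply the (non-sharpened) $C^1$-dynamics alternative from the authors' previous work \cite{WY20-1} to each orbit, and then promote the resulting $x$-dependent period to a uniform bound $m$ by a compactness argument on the global attractor. Indeed, \cite[Theorem A]{WY20-1} applied pointwise already yields that for every $x\in X$ either (a$'$) $\omega(x,F_0)$ is a linearly stable cycle of some $x$-dependent minimal period $p(x)$, or (b) holds. Thus the new content of Theorem A is the existence of an integer $m$ with $p(x)\le m$ whenever alternative (a$'$) occurs.

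To set the stage, pointwise dissipativeness together with the compactness of $F_0$ produces a compact global attractor $A\subset X$. Since every linearly stable cycle is contained in $A$, it suffices to bound uniformly the minimal periods of linearly stable cycles lying in $A$. Over $A$, the (improved) extended exponential separation of \cite{T94,WY20-1} provides a continuous $DF_0$-invariant splitting $X=E_1(x)\oplus E_2(x)$ for $x\in A$, with $E_1(x)$ one-dimensional spanned by a strongly positive vector $v_1(x)$ and with the growth of $DF_0^n$ along $E_1$ strictly dominating the growth along $E_2$. Consequently, a cycle $\mathcal{O}\subset A$ of minimal period $p$ is linearly stable precisely when the principal Floquet multiplier $\mu(\mathcal{O})$ associated with $E_1$ satisfies $\mu(\mathcal{O})\le 1$, equivalently, the top Lyapunov exponent $\lambda_1$ is non-positive along $\mathcal{O}$.

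The uniform bound is then obtained by contradiction: suppose linearly stable cycles $\mathcal{O}_k\subset A$ exist with minimal periods $p_k\to\infty$. Choose $x_k\in\mathcal{O}_k$ and, by compactness of $A$, pass to a subsequence so that $x_k\to x^{\ast}\in A$. If $\lambda_1(x^{\ast})<0$, the local stable manifold theorem supplied by the splitting $E_1(x)\oplus E_2(x)$ shows that a whole neighborhood of $x^{\ast}$ in $A$ is attracted by a single cycle of bounded period, which is incompatible with infinitely many $\mathcal{O}_k$ meeting this neighborhood at long-period points (the strong positivity of $v_1$ and the absence of order inside a cycle prevent such packing). If $\lambda_1(x^{\ast})=0$, a center-manifold reduction along the one-dimensional strongly positive direction $E_1$ yields a $C^1$-map on an interval whose linearly stable periodic orbits passing through a small window must have uniformly bounded minimal period, again contradicting $p_k\to\infty$.

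The hardest step is precisely this last dichotomy. The hyperbolic case $\lambda_1(x^{\ast})<0$ is relatively routine given the splitting from extended exponential separation. The neutral case $\lambda_1(x^{\ast})=0$ is the crux: one must combine mere $C^1$-smoothness (no $C^{1,\alpha}$ available as in \cite{PT92}), the strong positivity of $v_1$, and a delicate one-dimensional center-manifold analysis, with all estimates controlled uniformly over the compact attractor $A$ so that the resulting bound $m$ depends only on $F_0$ and not on the orbit under consideration.
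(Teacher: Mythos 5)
Your high-level skeleton matches the paper's: Theorem A is indeed obtained by combining the pointwise $C^1$-dynamics alternative of \cite{WY20-1} (Proposition \ref{P:da}) with a uniform bound on the minimal periods of linearly stable cycles in the attractor, proved by contradiction from a sequence of stable cycles with periods tending to infinity. However, the way you execute the uniform bound has genuine gaps. First, after extracting $x_k\to x^{\ast}$ you immediately split on $\lambda_1(x^{\ast})<0$ versus $\lambda_1(x^{\ast})=0$, but nothing in your argument shows $\lambda_1(x^{\ast})\le 0$: Lyapunov exponents are not continuous, and the limit of points lying on linearly stable cycles can a priori have positive exponent. The paper handles this with a dedicated robustness lemma (Lemma \ref{L:conti-Lya}, resting on the equivalence of Proposition \ref{cla-equi} and the uniform expansion $DG_0^{\nu(z)}(z)w\gg_1 3w$), and even then it only produces \emph{some} point $z'$ of the limit set $\Lambda$ with $\lambda_1(z',G_0)\le 0$; one must then pass to $\omega(z',G_0)$ and invoke Proposition \ref{P:da}(a) to land on an actual linearly stable cycle.

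Second, your treatment of both cases leans on local stable-manifold and center-manifold constructions, which is precisely the machinery the paper is structured to avoid: under mere $C^1$ regularity (no H\"older derivative) the Pesin-type stable manifold arguments of \cite{PT92,PH93} are unavailable, and in the neutral case your claim that a one-dimensional $C^1$ reduction forces uniformly bounded minimal periods through a small window is unsubstantiated (a $C^1$ interval map can carry stable periodic orbits of arbitrarily large period accumulating on a point). What actually closes the argument in the paper are two concrete lemmas you have no substitute for: Lemma \ref{L:weak-da}, a $C^1$ ``weak dynamics alternative'' near a linearly stable cycle proved by a variation-of-constants estimate on the Krein--Rutman splitting of $DG_0(z)$ (either the orbit stays in a prescribed neighborhood forever, or it becomes order-related), which combined with the unorderedness of cycles of strongly monotone maps localizes the long cycles near $O(z,G_0)$; and Lemma \ref{P:local-birfu} (Hess--Pol\'{a}\v{c}ik's local rigidity), asserting that every cycle in a small neighborhood of a linearly stable $k$-cycle has minimal period exactly $k$, which delivers the contradiction with $p_k\to\infty$. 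Your parenthetical appeal to ``strong positivity of $v_1$ and absence of order inside a cycle'' gestures at the first of these but does not supply the dichotomy, and nothing in your proposal plays the role of the second.
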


Theorem A concludes that, there exists an integer $m>0$ such that any orbit is either manifestly unstable; or asymptotic to a linearly stable cycle whose minimal period is bounded by $m$.
An immediate consequence of Theorem A is the following {\it improved generic convergence to cycles} for {\it $C^1$-smooth} systems.

\newtheorem*{corB}{\textnormal{\textbf{Corollary B}}}
\begin{corB}[Improved generic convergence for $C^1$-systems]
\emph{Let all hypotheses in Theorem A hold. Then there is an integer $m>0$ such that the set
$$Q_{0}:=\{x\in X:\omega(x,F_{0})\text{ is a linearly stable cycle of minimal period at most }m\}$$
contains an open and dense subset of $X$.}
\end{corB}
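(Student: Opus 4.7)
The plan is to deduce Corollary~B as a short consequence of Theorem~A by combining it with the (unsharpened) generic convergence theorem from the authors' earlier paper. That earlier result, \cite[Corollary 2.2]{WY20-1}, already asserts that under (H1)--(H2) and pointwise dissipativity there exists an open and dense subset $W\subseteq X$ such that for every $x\in W$ the $\omega$-limit set $\omega(x,F_0)$ is a linearly stable cycle of $F_0$, with, however, no a priori bound on its minimal period. The content of Corollary~B is precisely to inject the uniform bound $m$ delivered by Theorem~A into this picture.

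Concretely, I would take $m$ to be the integer supplied by Theorem~A and claim $W\subseteq Q_0$. Fix $x\in W$; by Theorem~A at least one of alternatives (a) and (b) holds at $x$. Suppose for contradiction that (b) applies with some $\delta>0$. Using the strong order structure of $X$ together with openness of $W$, I would select $y\in W$ with $y<x$ (or $y>x$) and $\|y-x\|$ as small as desired. Since both $x$ and $y$ lie in $W$, their forward orbits converge to linearly stable cycles; the phase-tracking mechanism near a linearly stable cycle of a $C^1$-smooth strongly monotone map (delivered by the extended exponential separation employed in \cite{T94,WY20-1}) then forces $\omega(y,F_0)=\omega(x,F_0)$ together with $\|F_0^n x-F_0^n y\|\to 0$, provided $y$ is taken close enough to $x$. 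This contradicts the uniform lower bound $\delta$ required by (b). Thus alternative~(a) of Theorem~A must apply at $x$, and~(a) already carries the period bound $\leq m$. Hence $x\in Q_0$, which yields $W\subseteq Q_0$ and so $Q_0$ contains the open dense subset $W$ of $X$.

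The main technical step---the only nonroutine point in this argument---is the phase-tracking assertion just above: that two nearby ordered points in $W$ actually produce orbits whose norm distance tends to zero, not merely whose $\omega$-limit sets are Hausdorff-close (phases might a priori differ by a nontrivial permutation around the cycle). This tracking is precisely the kind of statement the exponential-separation and local invariant manifold apparatus around a linearly stable cycle are designed to deliver in the $C^1$-smooth strongly monotone setting, so reusing those ingredients from the proof of Theorem~A requires no genuinely new input beyond what has already been developed there.
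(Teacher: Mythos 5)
Your overall strategy---start from the unsharpened generic convergence of \cite[Corollary 2.2]{WY20-1}, which supplies an open dense set $W$ on which $\omega(x,F_0)$ is a linearly stable cycle, and then upgrade this to $W\subseteq Q_0$ using the new period bound---is legitimate and genuinely different from the paper's route, which instead reruns the density argument of Pol\'{a}\v{c}ik and Tere\v{s}\v{c}\'{a}k \cite[Section 5]{PT92} on each ball $B_i$ with Theorem A in hand and then takes the union over $i$. However, the bridge you build from $x\in W$ to $x\in Q_0$ has a genuine gap. You try to rule out alternative (b) of Theorem A via a ``phase-tracking'' claim: that for $y\in W$ ordered with respect to $x$ and sufficiently close to it, one gets $\omega(y,F_0)=\omega(x,F_0)$ and $\|F_0^{n}x-F_0^{n}y\|\to 0$. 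Nothing in the paper, nor in \cite{T94} or \cite{WY20-1}, delivers this. A linearly stable cycle here only has spectral radius of $DF_0^{p}$ at most $1$; it need not be Lyapunov stable, so a nearby ordered point whose own $\omega$-limit set is a linearly stable cycle need not have its orbit shadow the orbit of $x$. The exponential separation controls the ratio between the codimension-one part and the principal direction of $T^{(n)}_{(x,y)}(x-y)$, not the absolute size of the principal component, and in the borderline case (spectral radius equal to $1$) the tracking statement is exactly the kind of assertion that fails (already the identity map on $\RR$ shows that orbits of distinct ordered points need not converge together). Establishing it would require a new stability argument the paper never carries out.

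The good news is that this detour through alternative (b) is unnecessary, and your strategy closes in one line. Since $F_0$ is compact and pointwise dissipative, $\omega(x,F_0)\subseteq A\subseteq\overline{D}_1$ for every $x\in X$. Hence if $x\in W$, the linearly stable cycle $\omega(x,F_0)$ lies in $\overline{D}_1$, and Corollary \ref{P:bdd-per-mono} applied with $F_{\epsilon}\equiv F_0$ (exactly as in the paper's own proof of Theorem A) bounds all stable periods of $F_0|_{\overline{D}_1}$ by $m$; so the minimal period of $\omega(x,F_0)$ is at most $m$ and $x\in Q_0$. This yields $W\subseteq Q_0$ directly, without deciding which alternative of Theorem A holds at $x$. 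With that replacement your argument is correct and arguably shorter than the paper's, at the cost of quoting \cite[Corollary 2.2]{WY20-1} as a black box rather than reproving density via \cite[Section 5]{PT92}.
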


Corollary B was first proved by Hess and Pol\'{a}\v{c}ik \cite[Corollary 4]{PH93} under the additional assumptions that $F_0$ is ${C}^{1,\alpha}$ and injective. Tere\v{s}\v{c}\'{a}k \cite{T94} first tackled the problem of the lower $C^1$-regularity and removed the injectivity of $F_0$. By some rather indirect arguments (see \cite[Propositions 1.3 and 3.1]{T94}), he obtained the generic convergence to cycles for $C^1$-smooth discrete-time systems. Unfortunately, Tere\v{s}\v{c}\'{a}k's Theorem has not yet been published.

Meanwhile, the improved generic convergence for $C^1$-system (Corollary B) was further informally announced in Tere\v{s}\v{c}\'{a}k \cite[p.2]{T94}. Since then, Corollary B was quite frequently stated, {\it but without proofs}, in many literatures (see, e.g. \cite[Section 4, p.387]{HS05maps},\cite[Section 5, p.97]{HS05} or \cite{H99,Mi94b,P02,S97}). Here, we obtain such improved generic convergence as a direct corollary of our sharpened $C^1$-dynamics alternative (Theorem A).

\vskip 3mm
Motivated by Theorem A, we further consider $C^1$-perturbations of the $C^1$-smooth mapping $F_{0}$, and obtain the sharpened dynamics alternative for the $C^1$-perturbed systems.
More precisely, we present an additional standing hypothesis:

\vskip 3mm
\noindent \textbf{(H3)} Let $J=[-\epsilon_{0},\epsilon_{0}]\subset\mathbb{R}$, and $F:J\times X\to X; (\epsilon,x)\mapsto F_{\epsilon}(x)$ is a compact $C^{1}$-map, i.e., $DF(\epsilon,x)$ continuously depends on $(\epsilon,x)\in J\times X$.
\vskip 3mm

The following theorem reveals that sharpened dynamics alternative of $F_0$ is robust under the $C^1$-perturbation.

\newtheorem*{thmc}{\textnormal{\textbf{Theorem C}}}
\begin{thmc}[$C^1$-robustness for sharpened dynamics alternative]
\emph{Assume that \textnormal{(H1)-(H3)} hold. Assume also $F_{0}$ is pointwise dissipative with an attractor $A$. Let $B_{1}\supset A$ be an open ball such that
\begin{equation}
\sup\{\|F_{\epsilon}x-F_{0}x\|+\|DF_{\epsilon}(x)-DF_{0}(x)\|: \epsilon\in J,\,x\in B_{1}\}
\end{equation}
sufficiently small. Then there exists a solid cone $C_{1}\subset{\rm Int}C$, an open bounded set $D_1$ (with $B_1\supset D_1\supset A$) and an integer $m>0$ such that, for each $x\in D_1$ and $|\epsilon|$ sufficiently small, either}

\textnormal{(a)} \emph{$\omega(x,F_{\epsilon})$ is a linearly stable cycle of minimal period at most $m$; or,}

\textnormal{(b)} \emph{there is a constant $\delta>$ 0 such that, for any $y \in D_1$ satisfying $y <_1 x$ or $y >_1 x$,}
$$\mathop{\limsup}\limits_{n \to +\infty}\|F_{\epsilon}^{n}x-F_{\epsilon}^{n}y\|\geq\delta.$$
\end{thmc}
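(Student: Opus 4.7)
The strategy is to recover, at the linearized level, a form of monotonicity for $F_\epsilon$ with respect to a smaller cone $C_1$, and then apply the machinery behind Theorem A fibre-by-fibre in $\epsilon$. Although $F_\epsilon$ itself need not be monotone, the derivative cocycle $\{DF_\epsilon^n(x)\}_{n\ge 0}$ can be arranged to be uniformly strongly positive with respect to $C_1$ -- precisely the structure exploited by Tere\v{s}\v{c}\'{a}k's extended exponential separation and its refinement in \cite{T94,WY20-1}.

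Concretely, I would first construct $C_1$ and $D_1$. Since $A$ is compact and $DF_0(x)$ is strongly positive for each $x\in A$, a continuity/compactness argument produces a closed solid cone $C_1\subset \mathrm{Int}\,C\cup\{0\}$ with $DF_0(x)(C\setminus\{0\})\subset\mathrm{Int}\,C_1$ for all $x\in A$. Choose an open bounded neighborhood $B\subset B_{1}$ of $A$ with $F_0(\bar B)\subset B$; the hypothesised smallness of $F_\epsilon-F_0$ and $DF_\epsilon-DF_0$ on $B_1$ then yields $F_\epsilon(\bar B)\subset B$ and $DF_\epsilon(x)(C\setminus\{0\})\subset\mathrm{Int}\,C_1$ for every $x\in \bar B$ and $|\epsilon|\le\epsilon_1$; set $D_1:=B$. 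Next, for any $F_\epsilon$-orbit $\{F_\epsilon^n x\}_{n\ge 0}$ in $\bar D_1$, the cocycle $\{DF_\epsilon^n(x)\}$ is uniformly strongly positive with respect to $C_1$, and invoking \cite{T94,WY20-1} (whose proofs use only this derivative structure, not the monotonicity of $F$) produces a continuous $DF_\epsilon$-invariant splitting $\mathbb{R}\,e_\epsilon(x)\oplus H_\epsilon(x)$ along such orbits, with $e_\epsilon(x)\in\mathrm{Int}\,C_1$ and a spectral gap that is uniform in $(\epsilon,x)$.

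The dichotomy is then extracted as in Theorem A, according to the sign of the principal Lyapunov exponent $\lambda_\epsilon(x)$ of the cocycle along $\omega(x,F_\epsilon)$. If $\lambda_\epsilon(x)\le 0$, the cocycle has spectral radius $\le 1$ along $\omega(x,F_\epsilon)$; the Pesin-type stable manifold argument, combined with the $C_1$-cone-monotonicity of $DF_\epsilon$ on the $\omega$-limit set, forces $\omega(x,F_\epsilon)$ to be a linearly stable cycle, and the uniform spectral gap together with the boundedness of $\bar D_1$ yields an $\epsilon$-independent period bound $m$ via the same mechanism that produces $m$ in Theorem A. If $\lambda_\epsilon(x)>0$, then any $y\in D_1$ with $y<_1 x$ or $y>_1 x$ satisfies $\pm(x-y)\in C_1\setminus\{0\}$, which, thanks to $C_1\subset\mathrm{Int}\,C$, has a strictly positive component along the principal direction $e_\epsilon(x)$; the uniform local unstable manifold of the cocycle at $x$ then produces a $y$-independent constant $\delta>0$ realising case (b).

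The main obstacle is the uniformity in $\epsilon$ of all the constants: the spectral gap in the extended exponential separation, the forward invariance of $D_1$, and above all the period bound $m$. The first two are standard continuous-dependence arguments for compact cocycles and attractors. The period bound is more delicate: one must verify that the argument giving $m$ in Theorem A is quantitative in the spectral gap and the diameter of $\bar D_1$, both of which are uniform in $\epsilon$. Throughout, the non-monotonicity of $F_\epsilon$ forces every step that previously used the partial order $<$ on $X$ to be re-expressed purely in terms of the cone-monotonicity of the derivative $DF_\epsilon$ with respect to $C_1$, which is what makes the $C^1$-perturbation robust rather than merely the $C^{1,\alpha}$-perturbation.
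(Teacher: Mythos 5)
Your overall plan---recover cone-monotonicity at the linearized level with respect to a smaller cone $C_1$ and rerun the dynamics-alternative machinery---is the right starting point, but the proposal has three genuine gaps, two of which are fatal as written. First, for the instability alternative (b) you argue via the derivative cocycle $\{DF_\epsilon^n(x)\}$ and a ``uniform local unstable manifold of the cocycle at $x$.'' For a non-monotone $F_\epsilon$ the derivative at $x$ alone does not control $F_\epsilon^n x-F_\epsilon^n y$ for an ordered comparison point $y$, and the unstable/stable manifold (Pesin-type) arguments you invoke require $C^{1,\alpha}$ regularity---precisely the hypothesis this paper is built to avoid. The paper's essential device, absent from your proposal, is a bundle map over the Cartesian square: $R_{\epsilon,(x,y)}=\int_0^1 DF_\epsilon(sx+(1-s)y)\,ds$, whose iterates satisfy the exact identity $F_\epsilon^n x-F_\epsilon^n y=R^{(n)}_{\epsilon,(x,y)}(x-y)$ and are strongly positive with respect to $C_1$ for $n\ge q$ (Tere\v{s}\v{c}\'{a}k's theorem gives this only for a sufficiently high iterate, not for a single application of $DF_\epsilon$ as you claim). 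Extended exponential separation is then applied to this bundle over $\overline{O(x)}\times\overline{O(x)}$, and instability is obtained by an elementary contradiction with the quantity $\xi_n=\sup\{\xi>0: x_n+\xi w\le_1 y_n\}$ and the expansion $DG_\epsilon^{\nu(z)}(z)w\gg_1 3w$---no invariant manifolds are used. Likewise, the stable case is handled not by a stable-manifold construction but by the norm-boundedness classification (Alta)/(Altb) and its proved equivalence with the Lyapunov-exponent classification.

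Second, your derivation of the uniform period bound $m$ is circular: you appeal to ``the same mechanism that produces $m$ in Theorem A,'' but in the paper Theorem A's bound is itself a corollary of the perturbation result you are trying to prove (Proposition 2.6 with $F_\epsilon\equiv F_0$). The bound does not fall out of a spectral gap; it requires a separate compactness and local-bifurcation argument: assume a sequence of linearly stable cycles with periods tending to infinity, form the limit set $\Lambda$ of these orbits, show (via continuity of principal Lyapunov exponents under $C^1$-perturbation and the dynamics alternative) that $\Lambda$ contains a linearly stable cycle of $G_0$, localize the nearby cycles using unorderedness of periodic orbits of the $C_1$-strongly monotone iterate, and then apply the Hess--Pol\'{a}\v{c}ik local bifurcation lemma to cap the periods---a contradiction. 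Without supplying this chain (the content of Section 4 of the paper), your argument establishes at best alternative (a) with ``a linearly stable cycle'' and no bound on its minimal period, i.e.\ only Tere\v{s}\v{c}\'{a}k's weaker conclusion rather than Theorem C.
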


Theorem C, as well as its stronger version (Theorem \ref{T:im-da-perturb}) will be proved in Section \ref{S:im-da-perturb}. As a by-product of the $C^1$-robustness for sharpened dynamics alternative of $F_0$, we can obtain the following improved generic convergence for $C^1$-perturbations of the discrete-time systems generated by $F_0$.

\newtheorem*{corD}{\textnormal{\textbf{Corollary D}}}
\begin{corD}[Improved generic convergence for $C^1$-perturbations]
\emph{Let all hypotheses in Theorem C hold. Then there exists an open bounded set $D_1\supset A$ and an integer $m>0$ such that, for any $|\epsilon|$ sufficiently small, the set
$$Q_{\epsilon}:=\{x\in D_1:\omega(x,F_{\epsilon})\text{ is a linearly stable cycle of minimal period at most }m\}$$
contains an open and dense subset of $D_1$.}
\end{corD}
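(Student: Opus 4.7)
The plan is to deduce Corollary D from Theorem C by the same open-and-dense strategy that allows one to pass from Theorem A to Corollary B, while respecting the fact that the perturbed map $F_\epsilon$ need not be monotone globally. The key geometric observation is that, by the $C^1$-closeness to $F_0$ and the choice of the narrower cone $C_1\subset\mathrm{Int}\,C$, $F_\epsilon$ acts strongly monotonely with respect to $<_1$ on $D_1$ for $|\epsilon|$ small, and this local monotonicity is all that the generic-convergence machinery requires.

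First I would apply Theorem C to produce $C_1$, $D_1\supset A$, and the integer $m>0$, so that for $|\epsilon|$ sufficiently small every $x\in D_1$ falls into exactly one of the alternatives (a) or (b). Then I would show that $Q_\epsilon$ is \emph{open} in $D_1$: if $x\in Q_\epsilon$, then $\omega(x,F_\epsilon)$ is a linearly stable cycle $\gamma$ of some minimal period $p\le m$, and linear stability combined with strong monotonicity with respect to $C_1$ makes $\gamma$ asymptotically stable in the usual Lyapunov sense. A standard persistence argument (implicit function theorem applied to $F_\epsilon^p-\mathrm{id}$ along $\gamma$, together with continuity of the spectral radius) then yields an open $D_1$-neighborhood of $x$ contained in $Q_\epsilon$.

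The main work is \emph{density} of $Q_\epsilon$ in $D_1$. Given any open $V\subset D_1$, pick $y_0\in V$ and a vector $v\in\mathrm{Int}\,C_1$, and form the totally $<_1$-ordered arc $\{y_t=y_0+tv:t\in[0,\tau]\}\subset V$, which is available because $C_1$ is solid. Assume for contradiction that $V\cap Q_\epsilon=\emptyset$; then every $y_t$ satisfies (b), and for each $s<t$ one has $\limsup_n\|F_\epsilon^n y_s-F_\epsilon^n y_t\|\ge \delta_{y_s}>0$. Because all forward orbits starting in $D_1$ remain in a compact neighborhood of $A$ and $F_\epsilon$ is $<_1$-monotone there, I would extract a diagonal subsequence $n_k\to\infty$ along which the sets $F_\epsilon^{n_k}y_t$ track $\omega(y_t,F_\epsilon)$ in the Hausdorff sense for a dense sub-collection of $t$, and then use the uniform period bound $m$ to realize these $\omega$-limit sets as points of a compact finite-dimensional family of linearly stable cycles near $A$. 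That family cannot contain uncountably many pairwise $\delta$-separated members, a contradiction, so some $y_{t^\ast}\in V\cap Q_\epsilon$ must exist.

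The hard part is this density step, specifically turning the pointwise, \emph{a priori} $x$-dependent separation constants $\delta_{y_t}$ from alternative (b) into a uniform obstruction along the ordered arc. The uniform bound $m$ on the minimal period supplied by Theorem C is what makes this possible, as it restricts the candidate $\omega$-limit sets to cycles of controlled combinatorial complexity and reduces the delicate infinite-dimensional separation question to a finite-dimensional compactness argument on the space of linearly stable periodic orbits near $A$. The remaining bookkeeping is a direct adaptation of the Pol\'{a}\v{c}ik--Tere\v{s}\v{c}\'{a}k density argument to the $C^1$-perturbed, not necessarily monotone, map $F_\epsilon$, with the cone $C_1$ playing the role of $C$.
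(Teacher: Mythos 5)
Your overall strategy (run a Pol\'{a}\v{c}ik--Tere\v{s}\v{c}\'{a}k-style ordered-arc argument using the dichotomy of Theorem C) is the right shape, but two of your key steps have genuine gaps. First, you assert that ``$F_\epsilon$ acts strongly monotonely with respect to $<_1$ on $D_1$ for $|\epsilon|$ small.'' This is not available: Proposition \ref{P:per-even-mono} only gives $<_1$-monotonicity of the iterates $F_\epsilon^n$ for $n\geq q$, and the paper repeatedly stresses that $F_\epsilon$ itself need not be monotone. Since the density argument along an ordered arc lives or dies by order-preservation of the maps being iterated, you must work with $G_\epsilon=F_\epsilon^q$ rather than $F_\epsilon$. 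This is exactly what the paper does: it applies Theorem \ref{T:im-da-perturb} (the version of Theorem C stated for $F_\epsilon^q$, with period bound $m_1$), obtains that $\tilde{Q}_\epsilon=\{x\in D_1:\omega(x,F_\epsilon^q)\text{ is a linearly stable cycle of minimal period at most }m_1\}$ contains an open dense subset of $D_1$ by citing \cite[Section 5]{PT92} for $F_\epsilon^q$, and then shows $Q_\epsilon=\tilde{Q}_\epsilon$ with $m=m_1q$ using the claim in the proof of Corollary \ref{P:bdd-per-mono} together with Proposition \ref{P:bdd-per-Fqeps}. Your proposal omits this passage between $F_\epsilon^q$ and $F_\epsilon$ entirely.

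Second, your density step does not close. Under the contradiction hypothesis that every $y_t$ on the arc satisfies alternative (b), there is no reason the sets $\omega(y_t,F_\epsilon)$ are linearly stable cycles, so the ``compact finite-dimensional family of linearly stable cycles'' you invoke is not present; moreover the separation $\limsup_n\|F_\epsilon^n y_s-F_\epsilon^n y_t\|\geq\delta_{y_s}$ is attained along pair-dependent subsequences of times, so no pigeonhole/total-boundedness argument at a fixed time applies, and ``uncountably many pairwise separated orbits'' is not by itself contradictory. The actual mechanism in \cite[Section 5]{PT92} is different: by $<_1$-monotonicity of $G_\epsilon$ the function $t\mapsto\limsup_n\|G_\epsilon^n y_t-G_\epsilon^n y_0\|_e$ (order norm) is monotone in $t$, hence has at most countably many discontinuities, and at each continuity point alternative (b) fails outright, which forces (a) on a dense (indeed residual/open-dense) set of parameters. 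Finally, your openness argument via the implicit function theorem applied to $F_\epsilon^p-\mathrm{id}$ breaks down in the borderline case where the spectral radius of $DF_\epsilon^p$ along the cycle equals $1$; fortunately the statement only requires $Q_\epsilon$ to \emph{contain} an open dense set, which the order-stability construction delivers without needing $Q_\epsilon$ itself to be open.
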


Under the additional assumptions that $F_0$ is ${C}^{1,\alpha}$ and injective, Corollary D was first proved by Hess and Pol\'{a}\v{c}ik \cite[Corollary 5]{PH93} for monotone perturbations, that is, $F_{\epsilon}$ is monotone for each $\eps\in J$. Tere\v{s}\v{c}\'{a}k \cite{T94} considered the problem of generic convergence for perturbations of $C^1$-smooth mapping $F_{0}$. More precisely, without the assumptions on the injectivity of $F_0$ and the monotonicity of $F_{\epsilon}$, he \cite[Corollary 5.1]{T94} proved that, for any $|\epsilon|$ sufficiently small, the set
$$R_\eps:=\{x\in D_1:\omega(x,F_{\epsilon})\text{ is a linearly stable cycle}\}$$
contains a generic (open and dense) subset of $D_1$. So, Corollary D here improved Tere\v{s}\v{c}\'{a}k's results by showing that the set of minimal periods of linearly stable cycles contained in $D_1$ is bounded by $m$, which is actually a direct corollary of our Theorem C.
\vskip 3mm

The approach of the present paper is motivated by Hess and  Pol\'{a}\v{c}ik \cite[Section 4]{PH93} and our recent works \cite{WY20-1,WY21}. However, in our current framework, the lower $C^1$ (instead of $C^{1,\alpha}$)-regularity and loss of the one-to-one property of $F_0$, as well as the non-monotonicity of the perturbations $F_\ep$, make our approach far more delicate and difficult. In order to overcome such a series of difficulties, we combined with the ideas and techniques developed in Tere\v{s}\v{c}\'{a}k \cite{T94} and in our works \cite{WY20-1,WY21}.
Among others, the main novelty of our approach is to construct a bundle map $T$ (induced by certain iteration of the perturbation $F_\ep$) over the Cartesian square of some bounded neighborhood nearby the attractor $A$. By appealing to the extended exponential separation (see \cite[Proposition 3.2 or Theorem A]{WY20-1}) with the alternative cone $C_1$, as well as certain delicate equivalence estimates in terms of the principal Lyapunov exponents (see Proposition \ref{cla-equi}), we accomplish our approach by establishing the
$C^1$-Robustness for Sharpened Dynamics Alternative (i.e., Theorem C, and its stronger version Theorem \ref{T:im-da-perturb}). This enables us to obtain as by-products all other main results, including the Improved Generic Convergence for $C^1$-systems and their $C^1$-perturbations (i.e., Corollary B and Corollary D).

This paper is organized as follows. In Section \ref{S:Not-Pre}, we agree on some notations and provide relevant definitions and preliminary results. Besides, we further present several crucial propositions (see Propositions \ref{cla-equi}-\ref{P:bdd-per-Fqeps}), which turn out to be very important for our proof. In Section \ref{S:im-da-perturb}, we will prove the $C^1$-robustness for sharpened dynamics alternative (Theorem C, and its stronger version Theorem \ref{T:im-da-perturb}). In Section \ref{S:bdd-per-Fqeps}, we will prove the boundedness of stable periods for $C^1$-perturbed systems (i.e., Proposition \ref{P:bdd-per-Fqeps}). Other main results, including Theorem A, Corollary B and Corollary D will be proved in Section \ref{S:proof-ThAB-CorD}. Finally, in Section \ref{S:exam}, we will present an example of a nonlocal perturbation for time-periodic parabolic equations to illustrate our abstract results.

\section{Notations and Preliminary results}\label{S:Not-Pre}

Let $(X,\|\cdot\|)$ be a Banach space. A cone $C\subset X$, i.e., $C$ is a closed convex subset of $X$ such that $\lambda C\subset C$ for all $\lambda>0$ and $C\cap(-C)=\{0\}$. $C$ is said to be a solid cone, if ${\rm Int}C\neq\emptyset$. We call $(X,C)$ a strongly ordered Banach Space if $C$ is a solid cone. For $x,y\in X$, we write $x\leq y$ if $y-x\in C$, $x<y$ if $y-x\in C\backslash\{0\}$, $x\ll y$ if $y-x\in{\rm Int}C$. The reversed signs are used in the usual way. Given two subsets $A$ and $B$ of $X$, $A\leq B$ ($A<B$, $A\ll B$) means that $x\leq y$ ($x<y$, $x\ll y$) holds for each choice of $x\in A$ and $y\in B$. A subset $B\subset X$ is \emph{unordered} if it does not contain points $x,y$ such that $x<y$.

Denote by $X^{*}$ the dual space of $X$. The set $C^{*}=\{l\in X^{*}:l(v)\geq 0 \text{ for any }v\in C\}$ is called the dual cone of $C$. If ${\rm Int}C\neq\emptyset$, then $C^{*}$ is indeed a cone in $X^{*}$ (see \cite{D85}). Let $C_{s}^{*}=\{l\in C^{*}:l(v)>0,\text{ for any }v\in C\backslash\{0\}\}$. Choose $e\in{\rm Int}C$  and let $\|x\| _{e}=\inf\{\rho>0:x\in\rho[ -e,e]\}$. We call $\|\cdot\|_{e}$ an order norm on $X$. In general, $\|\cdot\|$ is stronger than $\|\cdot\|_{e}$, i.e., there is a constant $r>0$ such that $\|x\|_{e}\leq r\|x\|$ for any $x\in X$ (see \cite{D85}).  A mapping $h:X\to X$ is called \emph{monotone} (\emph{strongly monotone}), if $x\leq y$ ($x<y$) implies $hx\leq hy$ ($hx\ll hy$).

In this paper, we sometimes also need to deal with arguments for another solid cone $C_1(\subset C)$. Therefore, for the sake of no confusion, we write $\leq_1, <_1, \ll_1$ as the corresponding order relation induced by the cone $C_1$ throughout the paper.

\vskip 2mm
For a continuous map $h:X\to X$, the orbit of $x\in X$ is $O(x,h)$=$\{h^{n}x:n\geq0\} $. The $\omega$-limit set of $x\in X$ is $\omega(x,h)=\mathop{\bigcap}\limits_{k\geq0}\overline{\{h^{n}x:n\geq k\}}$. Let $D$ be a subset of $X$, the orbit of $D$ is $O(D,h)=\mathop{\bigcup}\limits_{x\in D}O(x,h)$.
A point $x\in X$ is a \emph{periodic point of $h$}, if $h^{p}x=x$ for some integer $p\geq1$. $p$ is then a \emph{period} of $x$. Moreover, if $h^{l}x\neq x$ for $l=1,2,\cdots,p-1$, we call $x$ $p$-periodic. $p$ is the \emph{minimal period} of $x$. In particular, if $p=1$, we say $x$ is a fixed point of $h$. A set $K$ is called a \emph{cycle} if $K=O(x,h)$ for some periodic point $x$. For a $C^1$-smooth map $h$, $x\in X$ and $v\in X$, we define
$$\lambda(x,v,h)={\limsup_{n \to +\infty}\frac{\log\|DF^{n}(x)v\|}{n}}\quad \text{and} \quad\lambda_{1}(x,h)=\sup_{\substack{v\in X\\v\neq0}}\lambda(x,v,h).$$
We call $\lambda_{1}(x,h)$ the principal \emph{Lyapunov exponent of x} (with respect to $h$). A cycle $K=O(x,h)$ is \emph{linearly stable} if the spectral radius of the derivative $Dh^p$ along the cycle (of minimal period $p$) is no more than 1 (we also call $x$ as a linearly stable $p$-periodic point of $h$). In particular, if $p=1$, we say $x$ is a linearly stable fixed point of $h$. Let $B\subset X$. We say that $k$ is a \emph{stable period} for the restriction ${h|}_{B}$ if there is a linearly stable $k$-periodic point $x$ of $h$ such that the orbit $O(x,h)=\{h^{n}x:n=0,1,\cdots,k-1\}$ is contained in $B$. If $B=X$ we simply say that $k$ is a stable period of $h$. For brevity, we hereafter say $\omega(x,h)$ is a linearly stable cycle (of minimal period $p$), if $\omega(x,h)$ is a linearly stable cycle (of minimal period $p$) of $h$.

A continuous map $h:X\to X$ is called \emph{pointwise dissipative}, if there is a bounded subset $B\subset X$ such that $B$ attracts each point of $X$. An invariant set $A$ is said to be an \emph{attractor} of $h$, if $A$ is the maximal compact invariant set which attracts each bounded subset $B\subset X$. If $h:X\to X$ is compact and pointwise dissipative, then there is an attractor $A$ of $h$ (see e.g., \cite{Ha88}).

Fix any $M^{\ast}>0$. It is not difficult to see that one of the following alternatives must hold:
\begin{alignat*}{2}
\textnormal{(Alta)}: \emph{ there exists } z\in\omega(x,F_{0}) \emph{ such that } \|DF_{0}^{n}(z)\|<M^{\ast} \emph{ for any } n\geq1; \emph{or else},\quad\quad\quad\\
\textnormal{(Altb)}: \emph{ for any } z\in \omega(x,F_{0}), \emph{ there exists } n(z)\geq1 \emph{ such that } \|DF_{0}^{n(z)}(z)\|\geq M^{\ast}.\;\;\quad\quad\quad
\end{alignat*}

\begin{prop}\label{P:da}
{\rm (Dynamics alternative).} Assume that \textnormal{(H1)-(H2)} hold. Let $x\in X$ have a relatively compact orbit.
Fix $M^{\ast}>0$ sufficiently large, we have the following

\textnormal{(a)} If \textnormal{(Alta)} holds, then $\omega(x,F_{0})$ is a linearly stable cycle;

\textnormal{(b)} If \textnormal{(Altb)} holds, then there is a constant $\delta>$ 0 such that, for any $y \in X$ satisfying $y < x$ or $y > x$,$$\mathop{\limsup}\limits_{n \to +\infty}\|F_{0}^{n}x-F_{0}^{n}y\|\geq\delta.$$
\end{prop}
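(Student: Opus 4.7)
The cornerstone of my proof will be the extended exponential separation theorem established by the authors in \cite{WY20-1}, applied to the compact invariant set $\omega(x,F_{0})$. This yields a continuous $DF_{0}$-invariant splitting $X = L(z) \oplus E(z)$ for $z\in\omega(x,F_{0})$, where $L(z)=\text{span}\{w(z)\}$ is one-dimensional with $w(z)\in\text{Int}\,C$ and $E(z)$ is a closed complementary subspace that avoids $C\setminus\{0\}$. The extended exponential separation furnishes constants $M,\gamma>0$ with
$$\|DF_{0}^{n}|_{E(z)}\|\,\leq\, M e^{-\gamma n}\,\frac{\|DF_{0}^{n}(z)w(z)\|}{\|w(z)\|},\qquad n\geq 1,$$
uniformly in $z\in\omega(x,F_{0})$. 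Consequently the principal Lyapunov exponent $\lambda_{1}(z,F_{0})$ is realized along $L(z)$ and governs $\|DF_{0}^{n}(z)\|$ up to multiplicative constants. The entire proof is then reduced to statements about the sign of this principal exponent on $\omega(x,F_{0})$.

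For assertion (a), the pointwise bound $\|DF_{0}^{n}(z)\|<M^{\ast}$ at one point $z$ gives $\lambda_{1}(z,F_{0})\leq 0$. Because $z$ lies in its own $\omega$-limit set under the induced action on the compact invariant set $\omega(x,F_{0})$, I would propagate this bound via the subadditive cocycle structure of $\log\|DF_{0}^{n}(\cdot)w(\cdot)\|$ along principal directions, together with continuity of the splitting, to obtain $\sup_{w\in\omega(x,F_{0})}\lambda_{1}(w,F_{0})\leq 0$. Combining this with the strict exponential contraction on $E(\cdot)$ and the strong monotonicity of $F_{0}$, a Poincar\'e-type argument (the one-dimensional ordered dynamics on the leaves tangent to $L$, transverse contraction on $E$) forces $\omega(x,F_{0})$ to collapse onto a single periodic orbit; non-positivity of the top Lyapunov exponent then immediately yields linear stability of the resulting cycle.

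For assertion (b), for each $z\in\omega(x,F_{0})$ the inequality $\|DF_{0}^{n(z)}(z)\|\geq M^{\ast}$ combined with the same dominance estimate produces $\|DF_{0}^{n(z)}(z)w(z)\|/\|w(z)\|\gtrsim M^{\ast}/M$. Choosing $M^{\ast}$ large enough and exploiting subadditivity of the principal cocycle together with compactness of $\omega(x,F_{0})$, I would extract a uniform lower bound $\lambda_{1}(w,F_{0})\geq\lambda_{0}>0$ on the $\omega$-limit set. Now for $y<x$ (the case $y>x$ is symmetric), strong monotonicity places $F_{0}^{n}x-F_{0}^{n}y$ in $\text{Int}\,C$ for $n$ large; projecting this difference onto the principal direction at $F_{0}^{n}x$ and using the exponential growth at rate at least $e^{\lambda_{0}n}$, any attempt to make $\limsup_{n\to+\infty}\|F_{0}^{n}x-F_{0}^{n}y\|$ smaller than a threshold $\delta$ determined purely by the cone geometry and $\lambda_{0}$ would contradict the growth estimate; hence such a uniform $\delta>0$ exists.

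The principal obstacle is converting the pointwise hypotheses into uniform Lyapunov-exponent estimates on all of $\omega(x,F_{0})$: one cannot directly take suprema or infima of a cocycle pointwise, so the subadditivity and minimality arguments must be combined carefully with the continuity of the splitting provided by the extended exponential separation. A secondary difficulty in case (a) is ruling out a non-trivial limit set that fails to be a single cycle; this is precisely where the \emph{extended} (rather than standard) exponential separation of \cite{WY20-1} is indispensable, since the classical $C^{1,\alpha}$-version of Pol\'a\v cik--Tere\v s\v c\'ak would require injectivity of $F_{0}$ on $\omega(x,F_{0})$, which is not assumed here.
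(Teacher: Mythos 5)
Note first that the paper does not reprove this proposition here: its ``proof'' is a citation to \cite[Theorem 2.1]{WY20-1}, whose argument is replayed in the perturbed setting in the proof of Theorem \ref{T:im-da-perturb}. Measured against that argument, your sketch has a genuine gap at its foundation. You describe the extended exponential separation as producing a continuous one-dimensional invariant splitting $X=L(z)\oplus E(z)$ with $L(z)=\mathrm{span}\{w(z)\}$, $w(z)\in\mathrm{Int}\,C$. That is the \emph{classical} Pol\'a\v{c}ik--Tere\v{s}\v{c}\'ak separation, which is exactly what is unavailable under mere $C^1$ regularity without injectivity (as you yourself note in your closing paragraph). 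The extended version actually used yields only a continuous separating functional $l_{(\cdot)}\in C_{s}^{*}$ together with a compact \emph{set} of unit vectors $V_{(\cdot)}\subset\mathrm{Int}\,C$ and the estimate $\|T^{(n)}w\|\le M\gamma^{n}\|T^{(n)}v\|$ for $v\in V_{(\cdot)}$ and $l_{(\cdot)}(w)=0$; there is no single continuous invariant principal direction, so your appeals to ``the principal cocycle $\log\|DF_{0}^{n}(\cdot)w(\cdot)\|$'' and to projecting $F_{0}^{n}x-F_{0}^{n}y$ onto $L(F_{0}^{n}x)$ are not licensed by the tool you invoke. Moreover, the separation must be applied not over $\omega(x,F_{0})$ but over the Cartesian square of the orbit closure, with the bundle of averaged derivatives $T_{(u,v)}=\int_{0}^{1}DF_{0}(su+(1-s)v)\,ds$, because it is $T^{(n)}_{(x,y)}(x-y)=F_{0}^{n}x-F_{0}^{n}y$, not $DF_{0}^{n}$ along $\omega(x)$, that governs the difference of orbits. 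You never introduce this bundle, and without it the passage from spectral data on $\omega(x)$ to the nonlinear quantity $\|F_{0}^{n}x-F_{0}^{n}y\|$ in part (b) is precisely the step that fails.

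Relatedly, your route through a uniform positive lower bound on Lyapunov exponents reverses the paper's logic in a way that loses the needed quantitative control. In the actual proof, (Altb) is first upgraded, via compactness and the extended separation, to a \emph{uniform finite-time} cone expansion: a single $w\gg0$ and a bounded integer-valued $\nu(z)$ with $DF_{0}^{\nu(z)}(z)w\gg3w$ on $\omega(x)$ (Remark \ref{R:Altb}); positivity of the exponents is then a \emph{corollary} (Proposition \ref{cla-equi}), not an input. The finite-time expansion is what drives the contradiction: one sets $\xi_{n}=\sup\{\xi>0:F_{0}^{n}x+\xi w\le F_{0}^{n}y\}$ and shows, using the continuity estimate $(T^{(\nu)}_{(x_{1}',y_{1}')}-T^{(\nu)}_{(x_{2}',y_{2}')})w\in\{v:-w\ll v\ll w\}$ for nearby base points, that $\xi_{l+\nu(z_{l})}\ge2\xi_{l}$, contradicting $\sup_{n}\xi_{n}<\infty$. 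A $\limsup$-type exponent bound, even if uniform over $\omega(x)$, does not by itself yield uniformly quantified growth over finite time windows along the actual orbit of $x$ (which only approaches $\omega(x)$), and your sketch supplies no bridge. Part (a) suffers a milder version of the same vagueness: the ``Poincar\'e-type collapse'' is asserted rather than argued, whereas the proof in \cite{WY20-1} uses the bound $\|DF_{0}^{n}(z)\|<M^{\ast}$ at a single $z\in\omega(x)$ together with the separation on the Cartesian-square bundle to show the orbit of $z$ is stable and attracts $x$, whence $\omega(x)=\omega(z)$ is a linearly stable cycle.
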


\begin{proof}
See Wang and Yao \cite[Theorem 2.1]{WY20-1}.
\end{proof}

\begin{rmk}\label{R:Altb}
In fact, if (Altb) holds, one may further obtain that there exists $w\gg0$ and a bounded integer-valued function $\nu(z)$ on $z\in \omega(x)$ such that $DF_{0}^{\nu(z)}(z)w\gg3w$ for any $z\in \omega(x)$ (see the claim in the proof of \cite[Proposition 4.2]{WY20-1}).
\end{rmk}

In the following, we will show that our classification of (Alta)-(Altb) is equivalent to a classification of (Altc)-(Altd), by Pol\'{a}\v{c}ik and Tere\v{s}\v{c}\'{a}k \cite{PT92}, in terms of the principal Lyapunov exponents as
\begin{alignat*}{2}
\quad\textnormal{(Altc)}: \emph{ there exists } z\in\omega(x,F_{0}) \emph{ such that } \lambda_{1}(z,F_{0})\leq0; \emph{or else},\ \ \quad\quad\quad\quad\quad\quad\quad\quad\quad\quad\quad\quad\\
\quad\textnormal{(Altd)}: \emph{ for any } z\in \omega(x,F_{0}), \emph{  } \lambda_{1}(z,F_{0})>0.\ \,\quad\quad\quad\quad\quad\quad\quad\quad\quad\quad\quad\quad\quad\quad\quad\quad\quad\quad\quad\quad\quad
\end{alignat*}

\noindent As we will see in the following sections, such equivalence is crucial in our approaches for the main results.

\begin{prop}\label{cla-equi}
Assume that \textnormal{(H1)-(H2)} hold. Let $M^{\ast}>0$ be large. Then
\textnormal{(Alta)} is equivalent to \textnormal{(Altc)}, and \textnormal{(Altb)} is equivalent to \textnormal{(Altd)}.
\end{prop}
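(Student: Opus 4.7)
Since (Altb) is the logical negation of (Alta) and (Altd) is the logical negation of (Altc), it suffices to establish the two forward implications (Alta)$\Rightarrow$(Altc) and (Altb)$\Rightarrow$(Altd). I would fix $M^\ast$ large enough so that Remark \ref{R:Altb} is available whenever (Altb) holds.

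The implication (Alta)$\Rightarrow$(Altc) is immediate from the definitions: if some $z\in\omega(x,F_{0})$ satisfies $\|DF_{0}^{n}(z)\|<M^\ast$ for every $n\geq 1$, then for any $v\neq 0$ one has $\log\|DF_{0}^{n}(z)v\|\leq\log(M^\ast\|v\|)$, so dividing by $n$ and taking $\limsup$ yields $\lambda(z,v,F_{0})\leq 0$; taking the supremum over $v$ gives $\lambda_{1}(z,F_{0})\leq 0$.

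For the nontrivial direction (Altb)$\Rightarrow$(Altd) I would invoke Remark \ref{R:Altb} to extract a strongly positive vector $w\gg 0$ and an integer-valued function $\nu:\omega(x,F_{0})\to\mathbb{N}$ with uniform bound $N:=\sup_{z}\nu(z)<\infty$, satisfying $DF_{0}^{\nu(z)}(z)w\gg 3w$ for every $z\in\omega(x,F_{0})$. Fix any $z\in\omega(x,F_{0})$ and define return times $n_{0}=0$ and $n_{k+1}=n_{k}+\nu(F_{0}^{n_{k}}z)$, so $n_{k}\leq kN$. Using the chain rule
\[
DF_{0}^{n_{k+1}}(z)=DF_{0}^{\nu(F_{0}^{n_{k}}z)}(F_{0}^{n_{k}}z)\circ DF_{0}^{n_{k}}(z),
\]
together with the strong positivity (hence order-preservation) of each factor, an easy induction gives $DF_{0}^{n_{k}}(z)w\gg 3^{k}w$. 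A direct check with the order norm shows $v\geq 3^{k}w$ forces $\|v\|_{w}\geq 3^{k}$; combining with $\|\cdot\|_{w}\leq r\|\cdot\|$ yields $\|DF_{0}^{n_{k}}(z)w\|\geq 3^{k}/r$. Evaluating the $\limsup$ along $\{n_{k}\}$,
\[
\lambda_{1}(z,F_{0})\geq\lambda(z,w,F_{0})\geq\limsup_{k\to\infty}\frac{k\log 3-\log r}{n_{k}}\geq\frac{\log 3}{N}>0.
\]
Since $z\in\omega(x,F_{0})$ was arbitrary, (Altd) holds, and the two dichotomies then force both equivalences simultaneously.

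The only delicate point is the iteration step in (Altb)$\Rightarrow$(Altd): one has to exploit the uniform bound $\nu\leq N$ together with the monotonicity of each $DF_{0}(y)$ in a coordinated way, and then convert the resulting order inequality $DF_{0}^{n_{k}}(z)w\gg 3^{k}w$ into a quantitative lower bound on the Banach-space norm via the order norm $\|\cdot\|_{w}$. Everything else is formal.
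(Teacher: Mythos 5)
Your proposal is correct and follows essentially the same route as the paper: both reduce to showing (Altb)$\Rightarrow$(Altd) (the paper treats (Alta)$\Rightarrow$(Altc) as immediate, which you merely spell out), both invoke Remark \ref{R:Altb} to get $w\gg 0$ with $DF_{0}^{\nu(z)}(z)w\gg 3w$, iterate via return times bounded by $\sup_z\nu(z)$ to obtain $DF_{0}^{n_k}(z)w\gg 3^{k}w$, and convert this order inequality into a norm lower bound through the order norm $\|\cdot\|_{w}$ and the comparison $\|\cdot\|_{w}\le r\|\cdot\|$, arriving at the same lower bound $\log 3/\sup_z\nu(z)$ for the Lyapunov exponent.
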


\begin{proof}
It is clear that (Alta) implies that (Altc). So, it suffices to prove that (Altb) implies (Altd). For this purpose, we assume (Altb) and let $z_{n}:=F_{0}^{n}z$. By Remark \ref{R:Altb}, for any $z\in \omega(x,F_{0})$, we write $n_{1}:=\nu(z)$ and $n_{k+1}:=n_{k}+\nu(z_{n_{k}})$, $k\geq1$. The chain rule shows that
\begin{equation}\label{E-DFn0-w}
DF_{0}^{n_{k}}(z)w\gg 3^{k}w,\quad\quad\text{ for }k\geq1.
\end{equation}
Let $l=\mathop{\sup}\limits_{z\in \omega(x,F_{0})}\nu(z)$. Then
$$n_{k}=\nu(z)+\nu(z_{n_{1}})+\cdots+\nu(z_{n_{k-1}})\leq lk,$$
for any $k\geq1$. Hence
$$k\geq\frac{n_{k}}{l}.$$
Together with \eqref{E-DFn0-w}, this leads to
\begin{equation}\label{E-gam-DFn0-w}
(e^{-\gamma})^{n_{k}}DF_{0}^{n_{k}}(z)w\geq w,
\end{equation}
where $$\gamma:=\log(3^{\frac{1}{{\blue l}}})>0.$$
Recall that $\|\cdot\|$ is stronger than $\|\cdot\|_{w}$, i.e., there is a constant $r>0$ such that $\|x\|_{w}\leq r\|x\|$ for any $x\in X$. Therefore, \eqref{E-gam-DFn0-w} implies that $(e^{-\gamma})^{n_{k}}\|DF_{0}^{n_{k}}(z)w\|\geq\frac{1}{r}\|(e^{-\gamma})^{n_{k}}DF_{0}^{n_{k}}(z)w\|_{w}\geq\frac{1}{r}$. That is to say, $\|DF_{0}^{n_{k}}(z)w\|\geq\frac{1}{r}e^{\gamma n_{k}}$. As a consequence,
$$\lambda(z,w,F_{0})=\mathop{\limsup}\limits_{n \to +\infty}\frac{\log\|DF_{0}^{n}(z)w\|}{n}\geq\mathop{\limsup}\limits_{k \to +\infty}\frac{\log\|DF_{0}^{n_{k}}(z)w\|}{n_{k}}\geq\gamma>0.$$
Hence, $\lambda_{1}(z,F_{0})\geq\lambda(z,w,F_{0})\geq\gamma>0$, for any $z\in \omega(x,F_{0})$. Thus, we have proved that (Altb) implies (Altd), which completes the proof.
\end{proof}

\begin{prop}\label{P:per-even-mono}
Assume that \textnormal{(H1)-(H3)} hold and $F_0$ is pointwise dissipative with an attractor $A$. Let $B_1$ be an open ball containing $A$. If
$$\sup\{\|F_{\epsilon}x-F_0x\|+\|DF_{\ep}(x)-DF_0(x)\|:\ep\in J, x\in B_1\}<\ep^{\prime}$$ for some
$\ep^\prime >0$, then there is a solid cone $C_1\subset{\rm Int}C$, an open bounded set $D$ ($B_{1}\supset D\supset A$) and an integer $q>0$, such that

{\rm (i)}. $F_{\ep}^{n}(D)\subset D$ for any $\epsilon\in J$ and $n\geq q$; and

{\rm (ii)}. $F_{\ep}^{n}x\ll_1 F_{\epsilon}^{n}y$ whenever $\epsilon\in J$, $x<_1 y$ (with $x, y\in D$) and $n\geq q$.
\end{prop}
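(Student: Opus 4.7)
The plan has three ingredients: construct a convex trapping neighborhood $D$, build the sub-cone $C_1$ via extended exponential separation, and derive (ii) by the mean-value identity.

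First, using that $F_0$ is compact and pointwise dissipative with global attractor $A$, a standard trapping-region / Lyapunov-function argument (cf.\ \cite{Ha88}) produces a positively invariant open bounded $D_\star\supset A$ with $\overline{D_\star}\subset B_1$ and $F_0(\overline{D_\star})\subset D_\star$. Choosing $D\subset D_\star$ to be a sufficiently small open ball around $A$, the uniform attraction $F_0^{\,n}|_{\overline{D_\star}}\to A$ yields $F_0^{\,n}(\overline{D_\star})\subset D$ for every $n$ beyond some integer $q$. By (H3), upon shrinking $\epsilon^\prime$, the $C^0$-closeness gives $F_\epsilon(\overline{D_\star})\subset D_\star$ and $F_\epsilon^{\,n}(\overline{D_\star})\subset D$ for all $\epsilon\in J$ and $n\geq q$, so $F_\epsilon^{\,n}(D)\subset D$, establishing (i).

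Next, to construct $C_1$, we use that $F_0$ is compact $C^1$ with $DF_0(z)$ strongly positive on the compact $\overline{D_\star}$, so by the extended exponential separation of \cite[Proposition 3.2]{WY20-1} (extending \cite{T94}), after possibly enlarging $q$ we may assume there exist $e\in\mathrm{Int}\,C$ and constants $\beta,M>0$ with
$$\beta\,\|v\|\,e \,\leq\, DF_0^{\,q}(z)v \quad\text{and}\quad \|DF_0^{\,q}(z)\|\leq M, \qquad z\in\overline{D_\star},\ v\in C.$$
Set $C_1:=\{u\in C : u\geq \beta_0\,\|u\|\,e\}\cup\{0\}$ with $\beta_0<\beta/M$; standard checks show $C_1$ is a closed convex solid cone with $C_1\setminus\{0\}\subset\mathrm{Int}\,C$, $e\in\mathrm{Int}\,C_1$, and $DF_0^{\,q}(z)(C\setminus\{0\})\subset\mathrm{Int}\,C_1$ uniformly in $z\in\overline{D_\star}$. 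By $C^1$-continuity of $DF_\epsilon^{\,q}(\cdot)$ in $\epsilon$ (shrinking $\epsilon^\prime$ once more), the same inclusion persists for $DF_\epsilon^{\,q}(z)$, and each $DF_\epsilon(z)$ inherits enough positivity from the uniform lower bound to preserve $C\setminus\{0\}$ on $\overline{D_\star}$.

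Finally, for $x,y\in D$ with $x<_1 y$, set $v:=y-x\in C_1\setminus\{0\}\subset C\setminus\{0\}$ and $\xi_t:=x+tv\in D\subset D_\star$ by convexity of $D$. For $n\geq q$, the mean-value identity gives
$$F_\epsilon^{\,n}(y)-F_\epsilon^{\,n}(x) \,=\, \int_0^1 DF_\epsilon^{\,n}(\xi_t)\,v\,dt,$$
and the decomposition $DF_\epsilon^{\,n}(\xi_t)=DF_\epsilon^{\,q}(F_\epsilon^{\,n-q}\xi_t)\circ DF_\epsilon^{\,n-q}(\xi_t)$, together with the fact that $F_\epsilon^{\,k}\xi_t\in D_\star$ for all $k\geq 0$ by forward invariance, shows that the integrand lies in $\mathrm{Int}\,C_1$: the inner factor preserves $C\setminus\{0\}$, and the outer factor sends $C\setminus\{0\}$ into $\mathrm{Int}\,C_1$. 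Continuity of $t\mapsto DF_\epsilon^{\,n}(\xi_t)v$ then places the integral itself in $\mathrm{Int}\,C_1$, yielding (ii). The chief obstacle is the uniform interior bound in Step 2: pointwise strong positivity of $DF_0(z)$ does not automatically upgrade to a uniform interior estimate, since the unit sphere of $C$ is non-compact in infinite dimensions; this is precisely where the compactness of $F_0$ and the full extended exponential separation technology of \cite{WY20-1,T94} become essential, providing a continuous invariant principal direction $e$ along orbits inside $\overline{D_\star}$.
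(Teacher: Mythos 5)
The paper does not actually prove this proposition; it is quoted verbatim from Tere\v{s}\v{c}\'{a}k's (unpublished) Theorem 5.1, so your attempt is being measured against that construction. Step 1 is essentially fine (modulo the routine bookkeeping of error accumulation over the $q$ steps when passing from $F_0^q$ to $F_\epsilon^q$, and the fact that $D$ should be a small neighborhood of $A$, not a ball). The real problem is Step 2. The asserted uniform lower bound $\beta\,\|v\|\,e \le DF_0^{\,q}(z)v$ for \emph{all} $v\in C$ is false in general, and in particular fails for the parabolic examples the paper is aimed at: since $DF_0^{\,q}(z)$ is a compact operator and the unit sphere of $C$ is not compact, one has $\inf\{\|DF_0^{\,q}(z)v\| : v\in C,\ \|v\|=1\}=0$ (take $v$ a narrowing spike of sup-norm one; the linearized heat semigroup sends it to something of arbitrarily small norm), so $DF_0^{\,q}(z)v$ cannot dominate a fixed multiple of $e$. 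Strong positivity of $DF_0$ is a pointwise statement with no uniform interior margin over the cone, which is exactly why Tere\v{s}\v{c}\'{a}k's construction of $C_1$ does not proceed through a linear lower bound: it uses the compactness of the operator family to show that the \emph{normalized} images $R^{(q)}_{0,(x,y)}v/\|R^{(q)}_{0,(x,y)}v\|$ fill out a compact subset of ${\rm Int}\,C$, and takes $C_1$ to be a closed cone generated by a neighborhood of that compact set. Your $C_1$ is built on a premise that does not hold, so everything downstream of it is unsupported. (Also, $\overline{D_\star}$ is not compact in infinite dimensions; uniformity must be extracted from the precompact set $\overline{F(J\times\overline{D_\star})}$.)

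Step 3 inherits a second, independent gap: you factor $DF_\epsilon^{\,n}=DF_\epsilon^{\,q}(\cdot)\circ DF_\epsilon^{\,n-q}(\cdot)$ and assert that the inner factor ``preserves $C\setminus\{0\}$'' because $DF_\epsilon$ ``inherits enough positivity.'' But $F_\epsilon$ is not assumed monotone and $DF_\epsilon(z)$ is not assumed positive; a small operator-norm perturbation of a strongly positive operator need not map $C$ into $C$, precisely because (as above) $DF_0(z)v$ can be arbitrarily close to $\partial C$ (indeed to $0$) for unit $v\in C$. This non-monotonicity of the perturbation is the central difficulty the proposition is designed to overcome: the correct statement, as recorded in Remark \ref{Def-bundlemap}, is only that the \emph{composite} operators $R^{(n)}_{\epsilon,(x,y)}$ for $n\ge q$ are strongly positive with respect to the smaller cone $C_1$ — no positivity is claimed for intermediate blocks of fewer than $q$ factors, and none is available. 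As written, your argument would prove that every $C^1$-small perturbation of a strongly monotone map is order-preserving on $C$, which is false. The conclusion is correct, but the two key mechanisms (uniform interiority and perturbation-stable positivity) are not established by the proposal.
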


\begin{proof}
See Tere\v{s}\v{c}\'{a}k \cite[Theorem 5.1]{T94}.
\end{proof}

\begin{rmk}\label{Def-bundlemap}
For each $\epsilon\in J$ and $(x,y)\in D\times D$, we define the map
\begin{equation}\label{E-T-ep}
R_{\epsilon,(x,y)}:=\int_0^1{DF_{\epsilon}(sx+(1-s)y)ds},
\end{equation}
 and the iteration
\begin{equation}\label{E-iter-T-ep}
R_{\epsilon,(x,y)}^{(n)}=R_{\epsilon,(F_{\epsilon}^{n-1}x,F_{\epsilon}^{n-1}y)}\circ\cdots\circ R_{\epsilon,(F_{\epsilon}x,F_{\epsilon}y)}\circ R_{\epsilon,(x,y)},
\end{equation}
for $(x,y)\in D\times D$ and $n\ge 1$. By letting the integer $q>0$ in Proposition \ref{P:per-even-mono} larger, if necessary, it follows from \cite[Eq.(5.5) on p.17]{T94} that
\begin{equation}\label{E-fam-iter-T-ep}
\{R_{\epsilon,(x,y)}^{(n)}\in \mathcal{L}(X):(x,y)\in D\times D,\ n\geq q\}
\end{equation}
is a continuous family of compact linear operators which are {\it strongly positive with respect to the cone $C_1$}. In particular, we have
\begin{equation}\label{E-TF-rela}
F_{\epsilon}^nx-F_{\epsilon}^ny=R_{\epsilon,(x,y)}^{(n)}(x-y)
\end{equation}
and
\begin{equation}\label{E-TF-rela2}
DF_{\epsilon}^n(x)=R_{\epsilon,(x,x)}^{(n)},
\end{equation}
for any $x,y\in D$ and $n\ge q$.
\end{rmk}

\vskip 2mm
\begin{rmk}\label{R:Def-D1}
It deserves to point out that the cone $C_1$ is actually independent of all small $\ep^\prime$ (see \cite[Definition of $C_{1}$ on p.16]{T94}). In addition, for smaller $\ep^\prime$, one may always choose some open bounded subset $D_1$ (satisfying $D\supset\overline{D}_1\supset D_{1}\supset A$) such that, by letting the integer $q>0$ larger (if necessary), both $D$ and $D_1$ satisfies items (i)-(ii) in Proposition \ref{P:per-even-mono} (see \cite[Eq.(5.11) on p.19]{T94}).
\end{rmk}

\vskip 2mm
Throughout the paper, we always reserve the solid cone $C_1\subset{\rm Int}C$, the open bounded subset $D,D_1$ (with $D\supset\overline{D}_1\supset D_{1}\supset A$) and the integer $q>0$ as in Proposition \ref{P:per-even-mono} and
Remarks \ref{Def-bundlemap}-\ref{R:Def-D1}.

\begin{prop}\label{P:bdd-per-Fqeps}
Assume that \textnormal{(H1)-(H3)} hold and  $F_{0}$ is pointwise dissipative with an attractor $A$. Let $B_{1}\supset A$ be an open ball such that
\begin{equation}\label{E-C1-close}
\sup\{\|F_{\epsilon}x-F_{0}x\|+\|DF_{\epsilon}(x)-DF_{0}(x)\|:\epsilon\in J, x\in B_{1}\}
\end{equation}
sufficiently small. Then there exists an integer $m_1>0$ such that, for any $|\epsilon|$ sufficiently small, all the stable periods of ${F_{\epsilon}^{q}|}_{\overline{D}_1}$ are bounded above by $m_1$.
\end{prop}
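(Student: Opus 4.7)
The plan is a proof by contradiction, with Theorem C as the engine. Suppose the conclusion fails. Then I can extract sequences $\epsilon_n\to 0$ and $k_n\to\infty$ together with linearly stable $k_n$-periodic points $x_n\in\overline{D}_1$ of $F_{\epsilon_n}^{q}$ whose full $F_{\epsilon_n}^{q}$-orbits lie in $\overline{D}_1$. Up to a small enlargement of $D_1$ permitted by Remark \ref{R:Def-D1}, I may assume each $x_n\in D_1$.

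Since $x_n$ is $F_{\epsilon_n}^{q}$-periodic, $\omega(x_n,F_{\epsilon_n})$ is the finite $F_{\epsilon_n}$-orbit through $x_n$, whose $F_{\epsilon_n}$-minimal period $p_n$ satisfies $k_n=p_n/\gcd(p_n,q)$. Applying Theorem C at $x_n$ yields two cases. In case (a), $p_n\le m$ and therefore $k_n\le m$, contradicting $k_n\to\infty$. So after passing to a subsequence, case (b) holds for every $n$: there is $\delta_n>0$ such that
\[
\limsup_{j\to+\infty}\|F_{\epsilon_n}^{j}y-F_{\epsilon_n}^{j}x_n\|\ge \delta_n
\]
for every $y\in D_1$ with $y<_1 x_n$ or $y>_1 x_n$.

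I now contradict (b) using linear stability. By \eqref{E-TF-rela2}, $DF_{\epsilon_n}^{qk_n}(x_n)=R_{\epsilon_n,(x_n,x_n)}^{(qk_n)}$ is compact and strongly $C_1$-positive, so Krein--Rutman yields a simple principal eigenvalue equal to its spectral radius $\mu_n\le 1$ (by linear stability) with positive eigenvector $w_n\gg_1 0$, normalized by $\|w_n\|=1$. Pick $\eta>0$ so small that $y_n:=x_n+\eta w_n\in D_1$; then $y_n>_1 x_n$ and \eqref{E-TF-rela} gives
\[
F_{\epsilon_n}^{j}y_n-F_{\epsilon_n}^{j}x_n=\eta\,R_{\epsilon_n,(x_n,y_n)}^{(j)}w_n \qquad (j\ge q).
\]
Using the continuity in $(x,y)$ of the bundle \eqref{E-fam-iter-T-ep} together with the uniform spectral gap provided by the extended exponential separation, the ``return map'' $R_{\epsilon_n,(x_n,y_n)}^{(qk_n)}$ acts on $w_n$ as multiplication by $\mu_n\le 1$ up to an $O(\eta)$ perturbation, while contracting uniformly on the complement of $w_n$. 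Telescoping across returns (writing $j=qk_n\ell+r$ with $0\le r<qk_n$) produces $\|R_{\epsilon_n,(x_n,y_n)}^{(j)}w_n\|\le C$ for all $j\ge q$, with $C$ independent of small $\eta$ and of $n$. Hence $\limsup_j\|F_{\epsilon_n}^{j}y_n-F_{\epsilon_n}^{j}x_n\|\le C\eta<\delta_n$ for $\eta$ sufficiently small, contradicting (b).

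The delicate point is the uniform-in-$j$ bound in the marginal regime $\mu_n=1$, where linear stability is only neutral and no off-the-shelf stable-manifold argument applies. The resolution is to use the strongly $C_1$-positive bundle $R_{\epsilon,(x,y)}^{(n)}$ jointly with the extended exponential separation of \cite{WY20-1}: the separation contracts transverse directions uniformly, reducing the analysis to a one-dimensional iteration along $w_n$, and the continuity of the bundle in $(x,y)$ tames the nonlinear correction to this iteration by an $O(\eta)$ term. Assembling these two ingredients is what closes the contradiction, and it is the main technical hurdle of the proof.
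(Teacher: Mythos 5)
Your proposal is circular. In this paper, Theorem C (and its stronger form, Theorem \ref{T:im-da-perturb}) is \emph{deduced from} Proposition \ref{P:bdd-per-Fqeps}: the proof in Section \ref{S:im-da-perturb} first establishes only the weaker dichotomy ``(a$^\prime$) $\omega(x,F_{\epsilon}^{q})$ is a linearly stable cycle, or (b) holds'', and then cites Proposition \ref{P:bdd-per-Fqeps} verbatim to upgrade (a$^\prime$) to (a), i.e.\ to supply the uniform period bound. So the one ingredient of Theorem C that your contradiction actually uses --- the bound $m$ on the minimal period in case (a) --- is precisely the statement you are trying to prove. If you fall back on the dichotomy that is available without Proposition \ref{P:bdd-per-Fqeps}, namely (a$^\prime$)/(b), the argument collapses: a linearly stable $k_n$-periodic point satisfies (a$^\prime$) trivially, with no constraint on $k_n$.

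The second half of the proposal is also unsalvageable on its own terms. You aim to show that a linearly stable periodic point cannot satisfy alternative (b); but linear stability only means spectral radius $\mu_n\le 1$, and in the marginal case $\mu_n=1$ alternative (b) is perfectly compatible with linear stability (already for $X=\mathbb{R}$, $C=[0,\infty)$ and a fixed point of $F(x)=x+x^{2}$, suitably truncated to be dissipative: $F'(0)=1$, yet every $y>0$ is repelled by a definite amount). Your telescoping bound reflects this: the ``$O(\eta)$ perturbation'' per return is really controlled by $\|F_{\epsilon_n}^{j}y_n-F_{\epsilon_n}^{j}x_n\|$, the very quantity being estimated, and even granting it, the errors compound multiplicatively to $(1+O(\eta))^{\ell}$, which is unbounded when $\mu_n=1$. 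The paper avoids both problems by arguing through compactness and local rigidity instead of through the dynamics alternative: from a putative sequence of linearly stable periodic points with unbounded minimal periods one forms the limit set $\Lambda$ in \eqref{E:defn-Lambda}, extracts a linearly stable periodic point $z$ of $G_{0}$ in $\Lambda$ (Lemmas \ref{L:conti-Lya} and \ref{L:exis-LS}, via Proposition \ref{cla-equi}), shows a subsequence of the periodic orbits localizes near $O(z,G_{0})$ because periodic orbits of the $C_{1}$-strongly monotone maps $G_{\epsilon}$ are unordered (Lemmas \ref{L:weak-da} and \ref{P:localization}), and then applies the local bifurcation result (Lemma \ref{P:local-birfu}) to force those cycles to share the minimal period of $z$ --- a contradiction. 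That is the structure a correct proof needs.
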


Proposition \ref{P:bdd-per-Fqeps} will play a crucial role in our approach for the $C^1$-robustness for sharpened dynamics alternative in Section \ref{S:im-da-perturb} and Section \ref{S:proof-ThAB-CorD}. For the sake of clarity, we will postpone to Section \ref{S:bdd-per-Fqeps} the proof of Proposition \ref{P:bdd-per-Fqeps}.

\vskip 2mm
An immediate consequence of Proposition \ref{P:bdd-per-Fqeps} is the following

\begin{cor}\label{P:bdd-per-mono}
Let all hypotheses in Proposition \ref{P:bdd-per-Fqeps} hold. Then there exists an integer $m>0$ such that, for any $|\epsilon|$ sufficiently small, all the stable periods of ${F_{\epsilon}|}_{\overline{D}_{1}}$ are bounded above by $m$.
\end{cor}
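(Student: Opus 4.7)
The plan is to reduce the statement about stable periods of $F_{\epsilon}|_{\overline{D}_1}$ to the already-available bound $m_1$ on stable periods of $F_{\epsilon}^{q}|_{\overline{D}_1}$ supplied by Proposition \ref{P:bdd-per-Fqeps}. The bridge is elementary: any linearly stable periodic point of $F_{\epsilon}$ is automatically a linearly stable periodic point of the $q$-th iterate $F_{\epsilon}^{q}$, and the minimal periods differ only by a factor at most $q$.

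More precisely, suppose $x\in \overline{D}_1$ is a linearly stable $k$-periodic point of $F_{\epsilon}$ whose orbit $O(x,F_{\epsilon})$ lies in $\overline{D}_1$. Since $(F_{\epsilon}^{q})^{n}x = F_{\epsilon}^{nq}x = x$ exactly when $k\mid nq$, the minimal period of $x$ under $F_{\epsilon}^{q}$ is $k' := k/\gcd(k,q)$, and the corresponding $F_{\epsilon}^{q}$-orbit of $x$ is a subset of $O(x,F_{\epsilon})\subset\overline{D}_1$. Using $F_{\epsilon}^{k}x=x$, the chain rule yields $DF_{\epsilon}^{nk}(x)=(DF_{\epsilon}^{k}(x))^{n}$ for every $n\ge 1$; taking $n=q/\gcd(k,q)$ we obtain
\begin{equation*}
D(F_{\epsilon}^{q})^{k'}(x)=DF_{\epsilon}^{qk'}(x)=DF_{\epsilon}^{\mathrm{lcm}(k,q)}(x)=\bigl(DF_{\epsilon}^{k}(x)\bigr)^{q/\gcd(k,q)}.
\end{equation*}
Consequently the spectral radii are related by $\rho(D(F_{\epsilon}^{q})^{k'}(x))=\rho(DF_{\epsilon}^{k}(x))^{q/\gcd(k,q)}$, so $\rho(DF_{\epsilon}^{k}(x))\le 1$ forces $\rho(D(F_{\epsilon}^{q})^{k'}(x))\le 1$. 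Thus $x$ is a linearly stable $k'$-periodic point of $F_{\epsilon}^{q}$ with orbit in $\overline{D}_1$, and Proposition \ref{P:bdd-per-Fqeps} gives $k'\le m_1$ uniformly for all sufficiently small $|\epsilon|$.

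Combining these inequalities, $k = k'\cdot\gcd(k,q)\le k'\cdot q\le m_1 q$, so the choice $m:=m_1 q$ works, and the proof is complete. The only step which requires any care is confirming that the minimal period of $x$ under $F_{\epsilon}^{q}$ really is $k/\gcd(k,q)$ (not a smaller divisor), which uses the minimality of $k$ under $F_{\epsilon}$, and that linear stability transfers exactly through the spectral-radius identity above; neither of these presents a genuine obstacle, so the corollary is a direct arithmetic corollary of Proposition \ref{P:bdd-per-Fqeps} together with the invariance $F_{\epsilon}^{q}(\overline{D}_1)\subset\overline{D}_1$ ensured by Proposition \ref{P:per-even-mono} and Remark \ref{R:Def-D1}.
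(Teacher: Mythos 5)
Your proof is correct, and it takes a genuinely different (in fact, converse) route from the paper's. You establish the \emph{forward} transfer of linear stability -- a linearly stable $k$-periodic point of $F_\epsilon$ is a linearly stable $k'$-periodic point of $F_\epsilon^q$ with $k'=k/\gcd(k,q)$ -- via the clean identity $\rho\bigl(D(F_{\epsilon}^{q})^{k'}(x)\bigr)=\rho\bigl(DF_{\epsilon}^{k}(x)\bigr)^{q/\gcd(k,q)}$, and then invoke Proposition \ref{P:bdd-per-Fqeps} to get $k'\le m_1$ and hence $k\le m_1q$. The paper instead proves as its Claim the \emph{reverse} implication (a linearly stable $k$-periodic point of $F_\epsilon^q$ is a linearly stable $j$-periodic point of $F_\epsilon$ with $j\mid kq$), and does so by a more computational Gelfand-formula estimate $\lim_n\|DF_{\epsilon}^{nj}(F_{\epsilon}^{i}z)\|^{1/n}\le 1$ with an auxiliary constant $M_1$. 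Note that to deduce the Corollary from Proposition \ref{P:bdd-per-Fqeps} one genuinely needs the forward direction (one must first know that a linearly stable $F_\epsilon$-cycle in $\overline{D}_1$ is a linearly stable $F_\epsilon^q$-cycle before the bound $m_1$ on its $F_\epsilon^q$-period applies); the paper leaves this step implicit and its Claim is really what is used elsewhere (Remark \ref{R:pf-ThA} and the proof of Corollary D). So for this particular statement your argument is the more direct and logically self-contained one, and both routes produce the same bound $m=m_1q$. The only point to make explicit is that linear stability of the $F_\epsilon^q$-cycle requires the spectral radius bound at every point of the $F_\epsilon^q$-orbit, not just at $x$; this is immediate since the spectral radius of $Dh^{p}$ is constant along a cycle (because $\rho(AB)=\rho(BA)$ for bounded operators), so your proof stands as written.
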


\begin{proof}
By virtue of Proposition \ref{P:bdd-per-Fqeps}, we only need to show the following claim:
{\it If $z$ is a linearly stable $k$-periodic point of $F^q_\epsilon$, then $z$ is a linearly stable periodic point of $F_\epsilon$ of minimal period at most $kq$. }

For this purpose, we choose $j\mid kq$ be such that $F_{\epsilon}^{j}z=z$ and $F_{\epsilon}^{i}z\neq z$, for any $1\leq i\leq j-1$. Since $z$ is a linearly stable $k$-periodic point of $F_{\epsilon}^{q}$, one has
\begin{equation}\label{E-Fqeps-LS}
\mathop{\lim}\limits_{n\to+\infty}\|(DF_{\epsilon}^{kq}(F_{\epsilon}^{sq}z))^{n}\|^{\frac{1}{n}}
=\mathop{\lim}\limits_{n\to +\infty}\|DF_{\epsilon}^{nkq}(F_{\epsilon}^{sq}z)\|^{\frac{1}{n}}\leq1,
\end{equation}
for any $s=1,\cdots,k$. Recall that $j\mid kq$. Then, for each $i=0,1,\cdots,j-1$, there exist integers $s\in\{1,\cdots,k\}$ and $l\in\{0,\cdots,q-1\}$ such that $i=sq-l$. Let $d=kq/j$. Then
\begin{alignat*}{2}
\mathop{\lim}\limits_{n}\|(DF_{\epsilon}^{j}(F_{\epsilon}^{i}z))^{n}\|^{\frac{1}{n}}&\ =\mathop{\lim}\limits_{n}\|DF_{\epsilon}^{nj}(F_{\epsilon}^{i}z)\|^{\frac{1}{n}}
=\mathop{\lim}\limits_{n}\|DF_{\epsilon}^{nj}(F_{\epsilon}^{sq-l}z)\|^{\frac{1}{n}}\\
&\ =\mathop{\lim}\limits_{n}\|DF_{\epsilon}^{ndj}(F_{\epsilon}^{sq-l}z)\|^{\frac{1}{nd}}
=\mathop{\lim}\limits_{n}\|DF_{\epsilon}^{nkq}(F_{\epsilon}^{sq-l}z)\|^{\frac{1}{nd}}\\
&\ =\mathop{\lim}\limits_{n}\|DF_{\epsilon}^{kq-l}(F_{\epsilon}^{sq+(n-1)kq}z)\circ DF_{\epsilon}^{(n-1)kq}(F_{\epsilon}^{sq}z)\circ DF_{\epsilon}^{l}(F_{\epsilon}^{sq-l}z)\|^{\frac{1}{nd}}\\
&\ \leq\mathop{\lim}\limits_{n}(M_{1}^{2}\|DF_{\epsilon}^{(n-1)kq}(F_{\epsilon}^{sq}z)\|)^{\frac{1}{nd}}
=\mathop{\lim}\limits_{n}\|DF_{\epsilon}^{(n-1)kq}(F_{\epsilon}^{sq}z)
\|^{\frac{1}{n-1}\cdot\frac{n-1}{nd}}\\
&\overset{\eqref{E-Fqeps-LS}}{\leq}1,
\end{alignat*}
for any $i=0,\cdots,j-1$. Here, $M_{1}:=\max\{\|DF_{\epsilon}^{n}(F_{\epsilon}^{p}z)\|:0\leq n\leq kq,1\leq p\leq kq\}$. Therefore, $z$ is a linearly stable $j$-periodic point of $F_{\epsilon}$. Thus, we have proved the claim.

Let $m=m_1q$. Together with the claim, Proposition \ref{P:bdd-per-Fqeps} directly implies this Corollary.
\end{proof}

\begin{rmk}\label{R:bdd-per-mono}
Under the additional assumptions of the $C^{1,\alpha}$-regularity, the injectivity of $F_0$ and the monotonicity of the perturbations $F_\ep$, Hess and Pol\'{a}\v{c}ik \cite[Theorem 1 and Theorem 2]{PH93} succeeded in proving the boundedness of stable periods of ${F_{\epsilon}|}_{B}$, where $B\subset X$ is a bounded set. The approach in \cite{PH93} inherited the ideas and arguments in Pol\'{a}\v{c}ik and Tere\v{s}\v{c}\'{a}k \cite{PT92}, which is based on the exponential separation along $\omega(x)$ (see, e.g. Mierczy\'{n}ski \cite{Mi91}, Pol\'{a}\v{c}ik and Tere\v{s}\v{c}\'{a}k \cite{PT93}), as well as the idea of construction of stable manifolds in the so called Pesin's Theory (see \cite{Pesin76}). As a consequence, these additional assumptions cannot be dropped in \cite{PH93}. However, these additional assumptions are removed in our Proposition \ref{P:bdd-per-Fqeps} and Corollary \ref{P:bdd-per-mono}.
\end{rmk}

\section{$C^1$-robustness for sharpened dynamics alternative}\label{S:im-da-perturb}

In this section, we will focus on the $C^1$-robustness for sharpened dynamics alternative. As we mentioned in Section \ref{S:Not-Pre}, we hereafter always reserve the notations of the solid cone $C_1\subset{\rm Int}C$, the open bounded subset $D,D_1$ (with $D\supset\overline{D}_1\supset D_{1}\supset A$) and the integer $q>0$ as in Proposition \ref{P:per-even-mono} and
Remarks \ref{Def-bundlemap}-\ref{R:Def-D1}.

\vskip 2mm
Our main result of this section is the following
\begin{thm}\label{T:im-da-perturb}
{\rm ($C^1$-robustness for sharpened dynamics alternative).} Assume that \textnormal{(H1)-(H3)} hold. Assume also $F_{0}$ is pointwise dissipative with an attractor $A$. Let $B_{1}\supset A$ be an open ball such that
\begin{equation}
\sup\{\|F_{\epsilon}x-F_{0}x\|+\|DF_{\epsilon}(x)-DF_{0}(x)\|:\epsilon\in J, x\in B_{1}\}
\end{equation}
sufficiently small. Then there exists an integer $m_1>0$ such that, for each $x\in D_{1}$ and $|\epsilon|$ sufficiently small, either

\textnormal{(a)} $\omega(x,F_{\epsilon}^{q})$ is a linearly stable cycle of minimal period at most $m_1$; or,

\textnormal{(b)} there is a constant $\delta>$ 0 such that, for any $y \in D_{1}$ satisfying $y <_1 x$ or $y >_1 x$,
\begin{equation}\label{E-unstable}
\mathop{\limsup}\limits_{n \to +\infty}\|F_{\epsilon}^{nq}x-F_{\epsilon}^{nq}y\|\geq\delta.
\end{equation}
\end{thm}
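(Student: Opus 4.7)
The plan is to adapt the proof of the unperturbed dynamics alternative (Proposition \ref{P:da}) to the iterate $F_\epsilon^q$ on $\overline{D}_1$. By Proposition \ref{P:per-even-mono}, $F_\epsilon^q$ is strongly monotone on $\overline{D}_1$ with respect to the perturbation-independent cone $C_1$, and by Remark \ref{Def-bundlemap} the associated bundle-map family $\{R_{\epsilon,(x,y)}^{(n)}\}$ is a continuous, $C_1$-strongly positive family of compact linear operators satisfying the identities \eqref{E-TF-rela} and \eqref{E-TF-rela2}. Since $F_\epsilon$ is compact and $F_\epsilon^q(D)\subset D$, for any $x\in D_1$ the set $\omega(x,F_\epsilon^q)$ is a nonempty compact subset of $\overline{D}_1$, so the whole analysis can be carried out inside $\overline{D}_1$ where the $C_1$-strong monotonicity is available.

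I would then bifurcate along the principal Lyapunov exponent on $\omega(x,F_\epsilon^q)$: either (I) there exists $z\in\omega(x,F_\epsilon^q)$ with $\lambda_1(z,F_\epsilon^q)\leq 0$, or (II) $\lambda_1(z,F_\epsilon^q)>0$ for every $z\in\omega(x,F_\epsilon^q)$. In case (I), applying the extended exponential separation for strongly $C_1$-monotone $C^1$-maps (\cite[Proposition 3.2]{WY20-1}) along a minimal set of $\omega(x,F_\epsilon^q)$, together with the stable-manifold construction used in the proof of Proposition \ref{P:da}(a), yields that $\omega(x,F_\epsilon^q)$ is a linearly stable cycle of $F_\epsilon^q$. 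Proposition \ref{P:bdd-per-Fqeps} then bounds its minimal period by $m_1$, yielding alternative (a).

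In case (II), I would follow the (Altb)$\Leftrightarrow$(Altd) argument of Proposition \ref{cla-equi}, now applied to $F_\epsilon^q$ with cone $C_1$. Uniform positivity of $\lambda_1$ on the compact set $\omega(x,F_\epsilon^q)$ produces, as in Remark \ref{R:Altb}, a vector $w\gg_1 0$ and a uniformly bounded integer-valued function $\nu(\cdot)$ such that $R_{\epsilon,(z,z)}^{(\nu(z))}w=DF_\epsilon^{\nu(z)q}(z)w\gg_1 3w$ for every $z\in\omega(x,F_\epsilon^q)$. For any $y\in D_1$ with $y<_1 x$ or $y>_1 x$, the identity $F_\epsilon^{nq}x-F_\epsilon^{nq}y=R_{\epsilon,(x,y)}^{(n)}(x-y)$, together with the strong positivity and continuity of $\{R_{\epsilon,(x,y)}^{(n)}\}$ with respect to $C_1$ (from \eqref{E-fam-iter-T-ep}), allows one to transfer the diagonal growth to the off-diagonal bundle maps along the iterates, producing a subsequence $n_k\to\infty$ and a constant $c>0$ with $\|F_\epsilon^{n_kq}x-F_\epsilon^{n_kq}y\|_w\geq c$. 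Since $\|\cdot\|$ dominates $\|\cdot\|_w$ up to a constant, this yields the uniform lower bound \eqref{E-unstable}.

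The main obstacle is executing this transfer from the diagonal growth (produced by the principal Lyapunov exponent along $\omega(x,F_\epsilon^q)$) to the off-diagonal bundle maps $R_{\epsilon,(x,y)}^{(n)}$ with $(x,y)$ not on the diagonal, and doing so uniformly in the perturbation parameter $\epsilon$. The key perturbation-robust inputs are that the cone $C_1$ is independent of $\epsilon$ (Remark \ref{R:Def-D1}), that the family \eqref{E-fam-iter-T-ep} is continuous on $J\times D\times D$ and uniformly $C_1$-strongly positive, and that the stable periods are uniformly bounded in $\epsilon$ via Proposition \ref{P:bdd-per-Fqeps}. These three ingredients combine to propagate the growth from the accumulation set of the orbit to arbitrary cone-ordered pairs in $D_1$ and to close the dichotomy for the perturbed system.
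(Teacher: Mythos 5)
Your proposal follows essentially the same route as the paper's proof: the same bundle map $T_{(x_1,y_1)}=R_{\epsilon,(x_1,y_1)}^{(q)}$ over $Cl(O(x,F_\epsilon^q))\times Cl(O(x,F_\epsilon^q))$, the extended exponential separation of \cite{WY20-1} with the $\epsilon$-independent cone $C_1$, the same dichotomy (the paper phrases it via a bound $M^{\ast}$ on $\|DG_\epsilon^{n}(z)\|$ rather than the sign of $\lambda_1$, but Proposition \ref{cla-equi} makes these equivalent), and Proposition \ref{P:bdd-per-Fqeps} for the period bound $m_1$. The one step you leave as a sketch --- transferring the diagonal growth $DG_\epsilon^{\nu(z)}(z)w\gg_1 3w$ to the off-diagonal operators --- is carried out in the paper by a contradiction argument on $\xi_n=\sup\{\xi>0:x_n+\xi w\leq_1 y_n\}$, using the uniform continuity of the finitely many iterates $T^{(\nu)}$, $\nu\leq\tilde m$, which are exactly the ingredients you identify.
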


\begin{rmk}\label{R:pf-ThA}
Theorem \ref{T:im-da-perturb} is a stronger version of Theroem C. In fact, on one hand, \eqref{E-unstable} clearly implies the item (b) of Theorem C. On the other hand, by the claim in the proof of Corollary \ref{P:bdd-per-mono}, Theorem \ref{T:im-da-perturb}(a) entails that  $\omega(x,F_{\epsilon})$ is a linearly stable cycle of minimal period at most $m=m_1q$, which is exactly the item (a) of Theorem C.
\end{rmk}

\vskip 2mm
\noindent
{\it Proof of Theorem \ref{T:im-da-perturb}}. For each $x\in D_1$ and $|\epsilon|$ sufficiently small, we will {\it first show that: Either
\textnormal{(a$^\prime$)} $\omega(x,F_{\epsilon}^{q})$ is a linearly stable cycle; or \textnormal{(b)} holds.}

To this purpose, we note that $\omega(x,F_{\epsilon}^{q})\subset\overline{D}_{1}\,(\subset D)$, since $F_{\epsilon}^{q}(D_{1})\subset D_{1}$. For simplicity of notation, we denote $F_{\epsilon}^{q}$ by $G_{\epsilon}$. Then $G_{\epsilon}$ induces a continuous  map on $D\times D$ as
\begin{equation}\label{E:defn-G1ep}
G_{1\epsilon}:D\times D\to D\times D;(x_{1},y_{1})\mapsto G_{1\epsilon}(x_{1},y_{1}):=(G_{\epsilon}x_{1},G_{\epsilon}y_{1}),
\end{equation}
for all $(x_{1},y_{1})\in D\times D$. We further define the bundle map $T=\{T_{(x_1,y_1)}:(x_1,y_1)\in D\times D\}$ along $D\times D$ as
\begin{equation}\label{E:defn-T}
T_{(x_{1},y_{1})}=R_{\epsilon,(x_{1},y_{1})}^{(q)},\,\,\text{ for }(x_{1},y_{1})\in D\times D,
\end{equation}
where $R_{\epsilon,(x_{1},y_{1})}^{(q)}$ is defined in \eqref{E-iter-T-ep} from Remark \ref{Def-bundlemap}. It then follows from \eqref{E-fam-iter-T-ep} in Remark \ref{Def-bundlemap} that
\begin{equation}\label{E:C1-positive}
\{T_{(x_1,y_1)}\in \mathcal{L}(X):(x_1,y_1)\in D\times D\}
\end{equation}
is a continuous family of compact linear operators that are {\it strongly positive with respect to $C_1$}.
By virtue of \eqref{E-TF-rela2}, we have
\begin{equation}\label{E:T-G-ep}
T_{(x_{1},x_{1})}=DG_{\epsilon}(x_{1}),\,\, \text{ for any }x_1\in D;
\end{equation}
and moreover,
\begin{equation}\label{E:T-G-ep-diff}
T_{(x_1,y_1)}^{(n)}(x_{1}-y_{1})=G_{\epsilon}^{n}x_{1}-G_{\epsilon}^{n}y_{1}
\end{equation}
for any $(x_1,y_1)\in D\times D$ and $n\ge 1$.
Here, $T_{(x_{1},y_{1})}^{(n)}=T_{G_{1\epsilon}^{n-1}(x_{1},y_{1})}\circ\cdots\circ T_{G_{1\epsilon}(x_{1},y_{1})}\circ T_{(x_{1},y_{1})}$.

\vskip 2mm
Let $$K=Cl(O(x,G_{\epsilon}))\times Cl(O(x,G_{\ep})),$$ where
$Cl(O(x,G_{\ep}))$ denotes the closure of $O(x,G_{\ep})$.
Clearly, $K\subset\overline{D}_{1}\times\overline{D}_{1}\subset D\times D$. Moreover, $K$ is compact, because $G_\ep$ is compact. We consider the bundle map $(G_{1\ep},T)$ restricted on $K\times X$. Then the extended exponential separation theorem for continuous maps (see Wang and Yao \cite[Proposition 3.2 or Theorem A]{WY20-1}) implies that, for each $(x_1,y_1)\in K$, one can find a unit vector $l_{(x_1,y_1)}\in {C_1}_s^*$ (continuously depending on $(x_{1},y_{1})$) and a subset of unit vectors $V_{(x_1,y_1)}\subset{\rm Int}C_1$ (with $\mathop{\cup}\limits_{(x_1,y_1)\in K}V_{(x_1,y_1)}$ being a compact subset of ${\rm Int}C_1$), such that the bundle map $(G_{1\ep},T)$ on $K\times X$ satisfies the following exponentially separated property: There exist constants $M>0$ and $0<\gamma<1$ such that
\begin{equation*}\label{3.2}
 \|T_{(x_1,y_1)}^{(n)}w\|\leq M\gamma^{n}\|T_{(x_1,y_1)}^{(n)}v\|,
\end{equation*}
 for all $(x_1,y_1)\in K, n\geq1, v\in V_{(x_1,y_1)}$ and $l_{(x_1,y_1)}(w)=0$ with $\|w\|=1$.
 \vskip 2mm

Now, fix $M^{\ast}>0$ (sufficiently large). We point out that, for the limit set $\omega(x,G_{\ep})$, one of the following two cases must occur: \vskip 1mm

Case (i): there exists $z\in\omega(x,G_{\ep})$ such that $\|DG_{\epsilon}^{n}(z)\|<M^{\ast}$  for any $n\ge 1$; or else, \vskip 1mm

Case (ii):  for any $z\in \omega(x,G_{\ep})$ there exists $n(z)\ge 1$ such that $\|DG_{\epsilon}^{n(z)}(z)\|\geq M^{\ast}$. \vskip 2mm

\noindent Together with the exponentially separated property of $(G_{1\ep},T)$ on $K\times X$, we can repeat the same arguments in Wang and Yao \cite{WY20-1} to obtain that: If case (i) occurs, then $\omega(x,G_{\eps})$ is a linearly stable cycle (see \cite[Proposition 4.1]{WY20-1}). In other words, case (i) implies \textnormal{(a$^\prime$)}.

\vskip 3mm
While, if (ii) holds, then there exists $w\gg_1 0$ and a bounded integer-valued function $\nu(z)$ on $z\in \omega(x,G_{\epsilon})$ such that
\begin{equation}\label{E:DG-ep-3w}
DG_{\epsilon}^{\nu(z)}(z)w\gg_1 3w,\ \text{for any }z\in \omega(x,G_{\epsilon})
\end{equation}
(see the claim in \cite[Proposition 4.2, on p.9812]{WY20-1}).
In such circumstance, we will prove \eqref{E-unstable} by contradiction.

Suppose that \eqref{E-unstable} does not hold. Then, for any $\delta>0$, there exists $y\in D_{1}$ satisfying $y<_1 x$ or $y>_1 x$ and $\|G_{\ep}^{n}y-G_{\ep}^{n}x\|\leq\delta$ for all $n$ sufficiently large. For simplicity,
we denote $x_{k}=G_{\ep}^{k}x$ and $y_{k}=G_{\ep}^{k}y$, for $k=0,1,\cdots.$
Let
$\tilde{m}=\mathop{\sup}\limits_{z\in \omega(x,G_{\ep})}\nu(z)$
and $H=\overline{G_{\ep}(D_1)}\subset\overline{D}_1\subset D$.
Clearly, $H$ is compact.
Recall that $T_{(x',y')}$ continuously depends on $(x',y')\in D\times D$. Then there is a small $\delta>0$ such that
\begin{equation}\label{E:T-w--w-w}
(T_{(x_{1}^{\prime},y_{1}^{\prime})}^{(\nu)}-T_{(x_{2}^{\prime},y_{2}^{\prime})}^{(\nu)})w\in \{v\in X:-w\ll_1 v\ll_1 w\}
\end{equation}
for any $\nu=1,2,\cdots,\tilde{m}$
and $(x_{1}^{\prime},y_{1}^{\prime}),(x_{2}^{\prime},y_{2}^{\prime})\in H$ with $\|x_{1}^{\prime}-x_{2}^{\prime}\|<2\delta$ and $\|y_{1}^{\prime}-y_{2}^{\prime}\|<2\delta$.
So, for such $\delta$, we suppose without loss of generality that there exist some $y>_1 x$ and an integer $N>0$ such that $\|y_{n}-x_{n}\|<\delta$ for any $n\geq N$. For each $n$, define \begin{equation}\label{E:def-xi-n}
\xi_{n}=\sup\{\xi>0:x_{n}+\xi w\leq_1 y_{n}\}.
\end{equation}
Then $\xi_{n}=\xi_{n}\|w\|_{w}=\|\xi_{n}w\|_{w}\leq\|y_{n}-x_{n}\|_{w}\leq r\|y_{n}-x_{n}\|\leq r\delta$, for any $n\geq N$.

Let $z_{n}\in\omega(x,G_{\epsilon})$ be such that $\|z_{n}-x_{n}\|\to0$ as $n\to\infty$. Clearly, $x_{n},y_{n},z_{n}\in H$, for any $n\ge 1$. Moreover, one can find an integer $N_1\ge 1$ such that $\|z_n-x_n\|<\delta$ for any $n\ge N_1$. Hence,
$\|z_n-y_n\|<2\delta$, for any $n\ge N_2\triangleq\max\{N,N_{1}\}$. Let
$\xi=\mathop{\sup}\limits_{n\ge N_{2}}\{\xi_n\}\leq r\delta$. Now, choose $l\geq N_2$ such that $\xi_{l}>\frac{1}{2}\xi$, and we have
\begin{alignat*}{2}
y_{l+\nu(z_{l})}-x_{l+\nu(z_{l})}&\ \ \ \overset{\eqref{E:T-G-ep-diff}}=T_{(y_l,x_l)}^{(\nu(z_{l}))}(y_l-x_l)\\
&\overset{\eqref{E:C1-positive}+\eqref{E:def-xi-n}}{\ge_1}\xi_l\cdot T_{(y_l,x_l)}^{(\nu(z_l))}w\\
&\quad\ =\xi_l\cdot(T_{(y_{l},x_{l})}^{(\nu(z_{l}))}-T_{(z_{l},z_{l})}^{(\nu(z_{l}))})w+\xi_l\cdot T_{(z_{l},z_{l})}^{(\nu(z_{l}))}w\\
&\quad\overset{\eqref{E:T-G-ep}}{=}\xi_l\cdot(T_{(y_{l},x_{l})}^{(\nu(z_{l}))}-T_{(z_{l},z_{l})}^{(\nu(z_{l}))})w
+\xi_l\cdot DG_{\epsilon}^{\nu(z_{l})}(z_{l})w\\
&\overset{\eqref{E:DG-ep-3w}+\eqref{E:T-w--w-w}}{\geq_1}-\xi_lw+3\xi_{l}w=2\xi_{l}w>_1\xi w.
\end{alignat*}
This entails that $\xi_{l+\nu(z_{l})}>\xi$, contradicting the definition of $\xi$.
Thus, we have obtained \eqref{E-unstable}. In other words, case (ii) implies (b).

\vskip 2mm
Finally, we will show (a), that is, the upper bound of the stable period of cycles. Actually, this is directly from Proposition \ref{P:bdd-per-Fqeps}, which entails that there exists an integer $m_1>0$ such that, all the stable periods of ${F_{\epsilon}^{q}|}_{\overline{D}_1}$ are bounded above by $m_1$.
Thus, we have completed the proof. \hfill $\square$
\vskip 3mm


\section{Boundedness of stable periods for $C^1$-perturbed Systems}\label{S:bdd-per-Fqeps}
We will focus on the proof of Proposition \ref{P:bdd-per-Fqeps} in this section. Throughout this section, we always assume that (H1)-(H3) hold and $F_0$ is pointwise dissipative
with an attractor $A$.

As we mentioned in the end of Section \ref{S:Not-Pre} (see Remark \ref{R:bdd-per-mono}), Hess and Pol\'{a}\v{c}ik \cite[Theorem 2]{PH93} has actually obtained Proposition \ref{P:bdd-per-Fqeps} (see also Corollary \ref{P:bdd-per-mono}) under the additional assumptions that
\vskip 1mm

(I) $F_0$ is of $C^{1,\alpha}$ ($C^1$ with locally $\alpha$-H\"{o}lder continuous derivative), and it is one-to-one;

(II) $F_{\epsilon}:X\to X$ is a monotone mapping for each $\ep\in J$.
\vskip 1mm

\noindent Since their proofs in \cite{PH93} inherited the arguments and techniques from Pol\'{a}\v{c}ik and Tere\v{s}\v{c}\'{a}k \cite{PT92} (mainly based on the exponential separation and idea of the so called Pesin's Theory), these two additional assumptions cannot be dropped in \cite{PH93}.
In this section, we will remove the additional assumptions (I)-(II) and prove Proposition \ref{P:bdd-per-Fqeps}.
\vskip 2mm

Before proceeding our proof, we reserve the solid cone $C_1(\subset{\rm Int}C)$, the open bounded subset $D,D_1$ (with $D\supset\overline{D}_1\supset D_{1}\supset A$) and the integer $q>0$ be defined in Proposition \ref{P:per-even-mono} and
Remarks \ref{Def-bundlemap}-\ref{R:Def-D1}.

As in Section \ref{S:im-da-perturb}, we define a family of mappings $G=\{G_\ep\}_{\ep\in J}$ as
$$G:J\times X\to X; (\epsilon,x)\mapsto G_{\epsilon}(x)\triangleq F_{\epsilon}^{q}x.$$
Clearly, $G$ also satisfies (H1)-(H3). Moreover, for each $\ep\in J$, Proposition \ref{P:per-even-mono} directly implies that $G_\ep:D\to D$ is {\it strongly monotone with respect to $C_{1}$}.

In order to prove Proposition \ref{P:bdd-per-Fqeps}, motivated by the approaches in Hess and Pol\'{a}\v{c}ik \cite{PH93}, we need several technical lemmas, in which
we will overcome a series of difficulties due to the lack of the assumptions (I)-(II). We will summarize our ideas in the ending remark of this Section (see Remark \ref{R:lack-hol-mono}).

\begin{lem}\label{L:conti-Lya}
Let $\Gamma\subset D$ be a compact set invariant under $G_{0}$. Suppose that $\lambda_{1}(z,G_{0})>0$ for each $z\in \Gamma$. Then there exists a $\delta_{0}>0$ and a neighbourhood $V$ ($\subset D$) of $\Gamma$ such that, for $\epsilon\in[-\delta_{0},\delta_{0}]$ and $y\in V$ with $O(y,G_{\epsilon})\subset V$, we have $\lambda_{1}(y,G_{\epsilon})>0$.
\end{lem}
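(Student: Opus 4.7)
My strategy is to extract from the pointwise positivity of $\lambda_1(\cdot,G_0)$ on $\Gamma$ a uniform geometric direction $w$ together with a uniformly bounded hit‑time $\nu$ along which $DG_0$ expands $w$ by a factor at least $3$, then transport this expansion to nearby orbits of $G_\epsilon$ by a continuity/compactness argument, and finally iterate.

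The preliminary observation is that $G_\epsilon := F_\epsilon^q$ inherits (H1)--(H3), and by Proposition \ref{P:per-even-mono} and Remark \ref{Def-bundlemap} (with $n=q$) one has $DG_\epsilon(x) = R_{\epsilon,(x,x)}^{(q)}$ strongly positive with respect to $C_1$ for every $(\epsilon,x)\in J\times D$. Since $\Gamma$ is compact and $G_0$-invariant with $\lambda_1(z,G_0)>0$ on $\Gamma$, the equivalence (Altb)$\Leftrightarrow$(Altd) from Proposition \ref{cla-equi}, together with the refinement in Remark \ref{R:Altb}, applied to $G_0$ with $\Gamma$ playing the role of $\omega(x,F_0)$, yields a vector $w\gg_1 0$ and a bounded function $\nu:\Gamma\to\mathbb{N}$, with $L:=\sup_{z\in\Gamma}\nu(z)<\infty$, such that
$$DG_0^{\nu(z)}(z)\,w\gg_1 3w\qquad\text{for every }z\in\Gamma.$$
(The arguments in \cite{WY20-1} for $\omega(x,F_0)$ only exploit compactness and forward invariance of the set, so they apply verbatim to $\Gamma$.)

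Next, for each $z_0\in\Gamma$ set $k(z_0)=\nu(z_0)\in\{1,\ldots,L\}$. The element $DG_0^{k(z_0)}(z_0)w-3w$ lies in ${\rm Int}\,C_1$, and by (H3) the map $(\epsilon,y)\mapsto DG_\epsilon^{k(z_0)}(y)w$ is continuous on $J\times D$; hence there is a neighbourhood $U_{z_0}\subset D$ of $z_0$ and a $\delta_{z_0}>0$ with
$$DG_\epsilon^{k(z_0)}(y)\,w\gg_1 2w\qquad\text{whenever }|\epsilon|\leq \delta_{z_0},\ y\in U_{z_0}.$$
Compactness of $\Gamma$ lets me choose finitely many such $U_{z_0}$ covering $\Gamma$; put $V$ equal to their union and $\delta_0$ equal to the minimum of the associated $\delta_{z_0}$. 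Then for every $(\epsilon,y)\in[-\delta_0,\delta_0]\times V$ there is some $k(\epsilon,y)\in\{1,\ldots,L\}$ with $DG_\epsilon^{k(\epsilon,y)}(y)\,w\gg_1 2w$.

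Finally I would iterate. Given $y\in V$ with $O(y,G_\epsilon)\subset V$ and $|\epsilon|\leq\delta_0$, define $n_0=0$ and $n_{j+1}=n_j+k(\epsilon,G_\epsilon^{n_j}y)$, so $n_j\leq Lj$. By the chain rule and the strong $C_1$-positivity of each $DG_\epsilon(\cdot)$ on $D$, one obtains inductively
$$DG_\epsilon^{n_j}(y)\,w\gg_1 2^j w,\qquad j\geq 1,$$
exactly in parallel with the derivation of \eqref{E-DFn0-w} in the proof of Proposition \ref{cla-equi}. Passing to the order norm $\|\cdot\|_w$ (and recalling that it is dominated by $\|\cdot\|$ up to a constant $r$) gives $\|DG_\epsilon^{n_j}(y)w\|\geq \frac{1}{r}2^j$, whence
$$\lambda_1(y,G_\epsilon)\geq \lambda(y,w,G_\epsilon)\geq \limsup_{j\to\infty}\frac{\log(2^j/r)}{n_j}\geq \frac{\log 2}{L}>0,$$
which is the desired conclusion. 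The main obstacle is the first step: verifying that the arguments of Proposition \ref{cla-equi} and Remark \ref{R:Altb}, phrased there for $\omega$-limit sets, indeed require only compactness and forward invariance and therefore extend to an arbitrary compact invariant set $\Gamma$. Once this uniformization has been secured, the remaining perturbation and iteration steps are essentially automatic consequences of continuity in (H3) and of the strong $C_1$-positivity of $DG_\epsilon$ provided by Proposition \ref{P:per-even-mono}.
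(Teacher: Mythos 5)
Your proposal is correct and follows essentially the same route as the paper: convert $\lambda_{1}(\cdot,G_{0})>0$ on $\Gamma$ into the uniform expansion $DG_{0}^{\nu(z)}(z)w\gg_{1}3w$ via Proposition \ref{cla-equi} and the extended exponential separation on $\Gamma\times\Gamma$, propagate it to $(\epsilon,y)$ near $(0,\Gamma)$ by $C^1$-continuity and compactness, and then iterate using the strong $C_1$-positivity of $DG_{\epsilon}^{k}$ to force $\lambda_{1}(y,G_{\epsilon})\geq \log 2/L>0$ (the paper outsources this last iteration to Hess--Pol\'{a}\v{c}ik, while you write it out, mirroring the computation in Proposition \ref{cla-equi}). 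The one point you flag as an obstacle --- transferring Remark \ref{R:Altb} from $\omega$-limit sets to a general compact invariant $\Gamma$, and from the cone $C$ to $C_1$ --- is resolved in the paper exactly as you suggest, by running the extended exponential separation for the bundle map $(G_{10},T)$ over $(\Gamma\times\Gamma)\times X$ with respect to $C_1$.
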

\begin{proof}
Fix $M^{\ast}>0$ sufficiently large. It follows from Proposition \ref{cla-equi} that \textnormal{(Altd)} is equivalent to \textnormal{(Altb)}. So, if $\lambda_{1}(z,G_{0})>0$ for each $z\in \Gamma$, then \textnormal{(Altb)} holds for $G_0$:
\begin{equation}\label{E:=alb-G0}
\text{For any } z\in \Gamma, \text{ there exists }\,n(z)\ge 1 \text{ such that } \|DG_{0}^{n(z)}(z)\|\ge M^{\ast}.
\end{equation}
Let
$(G_{10},T)$ be the bundle map on $(D\times D)\times X$ defined in \eqref{E:defn-G1ep}-\eqref{E:defn-T} with $\eps=0$. Similarly as our arguments before \eqref{E:DG-ep-3w}, we obtain that $(G_{10},T)$ admits the extended exponentially separated property on $(\Gamma\times\Gamma)\times X$ with respect to the cone $C_1$. Then, together with \eqref{E:=alb-G0}, this implies that \eqref{E:DG-ep-3w} holds for $G_0$, that is,
there exists $w\gg_10$ and a bounded integer-valued function $\nu(z)$ on $z\in \Gamma$ such that \begin{equation*}\label{E:DG-0-3w}
DG_{0}^{\nu(z)}(z)w\gg_1 3w,\ \text{for any }z\in \Gamma.
\end{equation*}
Recall that $(\ep,x)\to G_\ep(x)$ is $C^1$. Then there exist a $\delta_0>0$, a neighbourhood $V$ ($\subset D$) of $\Gamma$ and a bounded integer-valued function $\nu(y)$ on $y\in V$ such that
\begin{equation}\label{E:DG-ep-3w-V}
DG_{\ep}^{\nu(y)}(y)w\gg_1 3w,\quad \text{for any }y\in V\,\text{ and } \ep\in[-\delta_0,\delta_0].
\end{equation}
Note also that
\begin{equation*}\label{E:DG-ep-posi}
DG_{\ep}^{\nu(y)}(y)=DF_{\ep}^{q\nu(y)}(y)\overset{\eqref{E-TF-rela2}}{=}R_{\epsilon,(y,y)}^{(q\nu(y))}.
\end{equation*}
It then follows from \eqref{E-fam-iter-T-ep} in Remark \ref{Def-bundlemap} that, for any $y\in V$, $DG_{\ep}^{\nu(y)}(y)$ is strongly positive with respect to $C_1$. So, together with \eqref{E:DG-ep-3w-V}, we can repeat the exactly same argument in Hess and Pol\'{a}\v{c}ik \cite[p.1318]{PH93} (via replacing the cone $C$ by the cone $C_1$) to obtain the conclusion of this lemma. We omit it here.
\end{proof}

Let $\{z_n\}$ be a sequence of linearly stable periodic points of ${G_{\epsilon_{n}}|}_{\overline{D}_{1}}$ with $\epsilon_{n}\to0$. To minimize the number of indices, we use the notation $G_{n}=G_{\epsilon_{n}}$. The ``limit set'' $\Lambda$ is defined by
\begin{equation}\label{E:defn-Lambda}
\Lambda:=\mathop{\bigcap}\limits_{j\ge 1}\overline{\mathop{\bigcup}\limits_{n\geq j}O(z_{n},G_{n})}.
\end{equation}
It is not difficult to see that (see \cite[Lemma 4.1]{PH93}) $\Lambda$ is a nonempty compact invariant (under $G_{0}$) subset of $\overline{D}_{1}$, and
$${\rm dist}(O(z_{n},G_{n}),\Lambda)\to 0,$$
 as $n\to+\infty$. Here, dist$(N,M):=\mathop{\sup}\limits_{a\in N}\mathop{\inf}\limits_{y\in M}\|a-y\|$.

\begin{lem}\label{L:exis-LS}
$\Lambda$ contains a linearly stable periodic point $z$ of $G_{0}$.
\end{lem}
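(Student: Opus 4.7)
My plan is to argue by contradiction, combining Lemma \ref{L:conti-Lya} with the equivalence \textnormal{(Alta)}$\Leftrightarrow$\textnormal{(Altc)} of Proposition \ref{cla-equi}, adapted to the iterate $G_0$ via the bundle-map framework of Remark \ref{Def-bundlemap}.

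First I would observe that linear stability forces $\lambda_1(z_n,G_n)\leq 0$ for every $n$: writing $p_n$ for the minimal period of $z_n$ under $G_n$, the spectral radius of $DG_n^{p_n}(z_n)$ is at most $1$, so Gelfand's formula together with the identity $DG_n^{kp_n}(z_n)=(DG_n^{p_n}(z_n))^k$ (valid by $p_n$-periodicity and the chain rule) yields $\lim_k\|DG_n^{kp_n}(z_n)\|^{1/k}\leq 1$; the remainder factors $DG_n^s(z_n)$ for $0\leq s<p_n$ are finitely many bounded operators, and hence $\lambda_1(z_n,G_n)\leq 0$.

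Next I would argue by contradiction: suppose $\Lambda$ contains no linearly stable $G_0$-periodic point. I claim this forces $\lambda_1(z,G_0)>0$ for every $z\in\Lambda$. Indeed, if some $z\in\Lambda$ had $\lambda_1(z,G_0)\leq 0$, then the analog of Proposition \ref{cla-equi} applied to $G_0$ (with the cone $C_1$) would place us in Case (i) of the proof of Theorem \ref{T:im-da-perturb}, yielding that $\omega(z,G_0)$ is a linearly stable $G_0$-cycle. Since $z\in\Lambda$ and $\Lambda$ is closed and $G_0$-invariant, $\omega(z,G_0)\subset\Lambda$, so $\Lambda$ would contain a linearly stable $G_0$-periodic point, a contradiction. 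With $\lambda_1(\cdot,G_0)>0$ throughout the compact $G_0$-invariant set $\Lambda$, I would then apply Lemma \ref{L:conti-Lya} with $\Gamma:=\Lambda$ to obtain $\delta_0>0$ and an open neighborhood $V\subset D$ of $\Lambda$ such that $\lambda_1(y,G_\epsilon)>0$ whenever $|\epsilon|\leq\delta_0$ and $O(y,G_\epsilon)\subset V$. Because $\operatorname{dist}(O(z_n,G_n),\Lambda)\to 0$ and $\epsilon_n\to 0$, for all sufficiently large $n$ we have $O(z_n,G_n)\subset V$ and $|\epsilon_n|\leq\delta_0$, whence $\lambda_1(z_n,G_n)>0$. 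This contradicts the first step and completes the proof.

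The main obstacle I anticipate is the legitimate use of the dynamics-alternative equivalence of Proposition \ref{cla-equi} for $G_0=F_0^q$ with respect to the auxiliary cone $C_1$, rather than for $F_0$ with $C$. This is already supplied by the bundle-map/extended-exponential-separation argument carried out (with $\epsilon=0$) in Case (i) of the proof of Theorem \ref{T:im-da-perturb}; one must, however, be careful to note that the $G_0$-invariance (and not $F_0$-invariance) of $\Lambda$ is enough to guarantee $\omega(z,G_0)\subset\Lambda$ for any $z\in\Lambda$, which follows at once from the closedness of $\Lambda$.
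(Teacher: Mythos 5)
Your overall route is essentially the paper's: both arguments rest on the contrapositive of Lemma \ref{L:conti-Lya} applied with $\Gamma=\Lambda$ (using $\operatorname{dist}(O(z_n,G_n),\Lambda)\to0$, $\epsilon_n\to0$, and $\lambda_1(z_n,G_n)\le0$ from linear stability) together with the equivalence of Proposition \ref{cla-equi} and Proposition \ref{P:da}(a) to produce a linearly stable cycle inside $\Lambda$. Your first step (Gelfand's formula giving $\lambda_1(z_n,G_n)\le 0$) and your final step are fine.

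There is, however, one genuine gap in your claim that ``$\lambda_1(z,G_0)\le 0$ for some $z\in\Lambda$ implies $\omega(z,G_0)$ is a linearly stable cycle.'' Both alternative (Altc) in Proposition \ref{cla-equi} and Case (i) of the proof of Theorem \ref{T:im-da-perturb} (equivalently, hypothesis (Alta) of Proposition \ref{P:da}(a)) are statements about points \emph{of the limit set} $\omega(z,G_0)$, not about $z$ itself, and $z$ need not belong to its own $\omega$-limit set. Since Lyapunov exponents are in general not continuous, you cannot pass $\lambda_1(\cdot,G_0)\le0$ from $z$ to a limit point of its orbit for free; as written, your invocation of Proposition \ref{cla-equi} is not licensed. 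The paper bridges exactly this gap by a \emph{second} application of Lemma \ref{L:conti-Lya}, now with $\Gamma=\omega(z,G_0)$ and $\epsilon=0$: if every $w\in\omega(z,G_0)$ had $\lambda_1(w,G_0)>0$, the lemma would give a neighbourhood $V$ of $\omega(z,G_0)$ in which every orbit confined to $V$ has positive exponent; but $O(G_0^Nz,G_0)\subset V$ for $N$ large while $\lambda_1(G_0^Nz,G_0)=\lambda_1(z,G_0)\le0$, a contradiction. This yields $\tilde z\in\omega(z,G_0)$ with $\lambda_1(\tilde z,G_0)\le0$, and only then do Proposition \ref{cla-equi} and Proposition \ref{P:da}(a) apply. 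Inserting this step (which fits naturally into your contradiction scheme) repairs the proof; as a minor remark, Proposition \ref{cla-equi} can be applied to $G_0=F_0^q$ with the original cone $C$ directly, since $DG_0(x)$ is a composition of strongly positive operators, so the detour through $C_1$ is not needed here.
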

\begin{proof}
Since $O(z_{n},G_{n})$ are linearly stable, it follows from Lemma \ref{L:conti-Lya} (by taking $\Gamma=\Lambda$) that there exists a $z^{\prime}\in\Lambda$ such that $\lambda_{1}(z^{\prime},G_{0})\leq0$. For such $z'\in \Lambda$, the invariance of $\Lambda$ implies that $\omega(z^{\prime},G_{0})\subset\Lambda$. Note that $\lambda_{1}(G_{0}^{n}z^{\prime},G_{0})=\lambda_{1}(z^{\prime},G_{0})\leq0$ for any $n\geq0$. Then, again by Lemma \ref{L:conti-Lya} (taking $\Gamma=\omega(z^{\prime},G_{0})$), one can find some $\tilde{z}\in\omega(z^{\prime},G_{0})$ such that $\lambda_{1}(\tilde{z},G_{0})\le 0$.

Recall that (H1)-(H2) hold for $G_0$. Then Proposition \ref{cla-equi} indicates that for $M^{\ast}>0$ large, there exists $z\in\omega(z^{\prime},G_{0})$ such that $\|DG_{0}^{n}(z)\|<M^{\ast}$ for any $n\geq1$. Hence, Proposition \ref{P:da}(a) implies that $\omega(z^{\prime},G_{0})$ is a linearly stable cycle. So, $\omega(z^{\prime},G_{0})=O(z,G_{0})$ and $z$ is a linearly stable periodic point of $G_{0}$.
\end{proof}

The following lemma provides a classification of the the orbits of $G_{\epsilon}$ nearby a linearly stable periodic point of $G_{0}$.

\begin{lem}\label{L:weak-da}
Let $z\in D$ be a linearly stable periodic point of $G_{0}$. Then, for any neighbourhood $V$ of $O(z,G_{0})$, there exist constants $\rho>0$ and $\delta_{1}>0$ such that for any $\epsilon\in[-\delta_{1},\delta_{1}]$ and $y\in X$ with $\|y-z\|<\rho$, one of the following alternatives must occur:

{\rm (i)}. $O(y,G_{\epsilon})\subset V$; \emph{or,}

{\rm (ii)}. There are positive integers $r, k$ such that $G_{\epsilon}^{r+k}y\gg_1 G_{\epsilon}^{r}y$ or $G_{\epsilon}^{r+k}y\ll_1 G_{\epsilon}^{r}y$.
\end{lem}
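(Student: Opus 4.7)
The plan is to pass to the $p$-th iterate $H_\epsilon := G_\epsilon^p$, where $p$ is the minimal period of $z$ under $G_0$, and analyze the Krein--Rutman structure of $DH_0(z)$. By the chain rule together with \eqref{E-TF-rela2} and Remark~\ref{Def-bundlemap}, $DH_0(z) = R_{0,(z,z)}^{(pq)}$ is compact and strongly positive with respect to $C_1$, so by Krein--Rutman its spectral radius $\lambda_0$ is a simple positive eigenvalue with an eigenvector $v_0 \in \mathrm{Int}\,C_1$, and the remainder of the spectrum lies strictly inside the disk of radius $\lambda_0$. Linear stability of $z$ forces $\lambda_0 \le 1$.

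I then split into two cases. If $\lambda_0 < 1$, then $z$ is a hyperbolic attracting fixed point of $H_0$, and standard $C^1$-persistence produces, for all $|\epsilon| \le \delta_1$, a fixed point $z_\epsilon$ of $H_\epsilon$ together with a forward-invariant neighborhood on which $H_\epsilon$ is contracting; combined with continuity of $G_\epsilon^j$ for $0 \le j < p$, this forces $O(y,G_\epsilon) \subset V$ for all $y$ with $\|y - z\| < \rho$, giving alternative~(i). If $\lambda_0 = 1$, the spectral gap between $1$ and the rest of the spectrum lets me invoke a $C^1$ local center-manifold theorem at the continuation $z_\epsilon$, producing a one-dimensional $H_\epsilon$-invariant manifold $W^c_\epsilon$ tangent to $v_0$ together with a strong-stable foliation that exponentially attracts off-manifold orbits. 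On $W^c_\epsilon$ the restriction $H_\epsilon|_{W^c_\epsilon}$ is a one-dimensional $C^1$ map: either its scalar orbit converges to $z_\epsilon$, in which case alternative~(i) follows via the contraction on the stable foliation; or, at some iterate $\tilde y = G_\epsilon^r y$ lying on $W^c_\epsilon$, the difference $H_\epsilon \tilde y - \tilde y$ equals a nonzero real multiple of $v_0$ to leading order, and since $v_0 \in \mathrm{Int}\,C_1$ the sign translates into $G_\epsilon^{r+p}y \gg_1 G_\epsilon^r y$ or $\ll_1 G_\epsilon^r y$, i.e.\ alternative~(ii) with $k = p$.

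For a general $y$ not on $W^c_\epsilon$, I would use the exponential contraction along the strong-stable foliation to reduce, after finitely many $H_\epsilon$-iterates, to the on-manifold analysis, controlling the error via the continuous family of strongly $C_1$-positive operators in \eqref{E:C1-positive}. Continuous dependence of $z_\epsilon$, $v_0$, and the stable projection on $\epsilon$ allows the constants $\rho$ and $\delta_1$ to be chosen uniformly in $|\epsilon|$ small.

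The main obstacle will be the rigorous treatment of the degenerate case $\lambda_0 = 1$ under only $C^1$-regularity: one needs a center-manifold result that does not require $C^{1,\alpha}$-smoothness (available in the Banach-space setting with compact linearization, but needing careful invocation), and one must verify that the strong $C_1$-positivity structure carried by the bundle map survives the manifold reduction, so that the scalar comparison along $v_0$ produces a genuine $\mathrm{Int}\,C_1$-strict inequality between actual iterates in $X$, and not merely a sign statement for a one-dimensional projection.
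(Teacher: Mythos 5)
Your proposal takes a genuinely different route from the paper, but it has gaps that are fatal as written, concentrated in the case $\lambda_0=1$ --- which is exactly the case that matters, since linear stability only gives $\lambda_0\le 1$.

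First, your center-manifold construction is anchored at ``the continuation $z_\epsilon$'' of the fixed point of $H_\epsilon=G_\epsilon^p$. When $1\in\sigma(DH_0(z))$ the implicit function theorem does not apply and the fixed point need not persist: a saddle-node scenario ($H_\epsilon(x)\approx x+x^2+\epsilon$ on the center direction) destroys it, and then the entire scaffolding ($z_\epsilon$, $W^c_\epsilon$, the strong-stable foliation through $z_\epsilon$) has no base point. That drifting behavior is precisely one of the behaviors the lemma must classify (it should land in alternative (ii)), so it cannot be excluded. Second, even granting a center manifold (say for the extended map $(x,\epsilon)\mapsto(H_\epsilon x,\epsilon)$), a general orbit never lies on it; it only approaches it. To convert ``$H_\epsilon\tilde y-\tilde y$ is a nonzero multiple of $v_0$ to leading order'' into a genuine relation in $\mathrm{Int}\,C_1$ you must show the off-manifold error is small \emph{relative to} the on-manifold displacement of consecutive iterates, and there is no a priori lower bound on that displacement when the center dynamics converges slowly or is degenerate ($C^1$ only). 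You flag this, but it is the crux, not a loose end: without a quantitative cone-domination estimate the scalar sign proves nothing about the order $\gg_1$.

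The paper avoids all of this with a more elementary argument that needs no case split on $\lambda_0$ and no invariant manifolds. Writing $u_n=G_\epsilon^n y-z$, it derives the iteration $u_{n+1}=Au_n+g(u_n)+H(\epsilon,u_n)$ with $A=DG_0(z)$, takes the Krein--Rutman splitting $X=\mathrm{span}\{v\}\oplus X_2$ with $\|A_2^n\|\le M\beta^n$, $\beta<1$ (this is where linear stability enters), and runs a variation-of-constants estimate on the $X_2$-component alone, using only that $\|g(u)\|\le C_3\|u\|$ for $\|u\|$ small (which is exactly what $C^1$, as opposed to $C^{1,\alpha}$, provides). The dichotomy is then: either $\|u_n\|<R$ for all $n$ (alternative (i)), or at the first escape time the $X_2$-part is still dominated by the $v$-part, and a cone lemma (property (P1): $\|u^2\|\le C_2\|u^1\|$ and $\|u\|\ge R$ imply $u\gg_1 B_{\alpha R}$ or $u\ll_1 B_{\alpha R}$) yields $u_n\gg_1 u_0$ or $u_n\ll_1 u_0$. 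Note that the comparison is between an iterate and the \emph{initial point} (whose norm is $<\rho<\alpha R$), not between consecutive iterates; this is what sidesteps the ratio problem your approach runs into. If you want to salvage your route, you would need the extended-system center manifold to handle non-persistence, plus an explicit analogue of (P1); at that point you will essentially have reconstructed the paper's estimate with heavier machinery.
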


\begin{proof}
This lemma has been proved in \cite[Lemma 4.2]{PH93} under the $C^{1,\alpha}$-smooth assumption for $F_0$ (equivalently, for $G_0$). Here, we give our improved proof under the $C^1$-smooth assumption.

Without loss of generality, we assume that $z$ is a linearly stable fixed point of $G_{0}$. Let $V$ be any neighborhood of $z$, we will find $\rho>0$ and $\delta_1>0$ such that (i)-(ii) hold.

To this purpose, we denote $y_n=G_{\epsilon}^{n}y$ and let $u_n=y_{n}-z$, for $n\ge 0$. Then, $u_n$ satisfies the following iteration
\begin{equation}\label{E:un+1-un}
u_{n+1}=Au_{n}+g(u_{n})+H(\epsilon,u_{n}),\quad n\ge 0,
\end{equation}
where $$A=DG_{0}(z),$$ $$g(u)=\int_0^1{[DG_{0}(z+su)-DG_{0}(z)]uds},$$ and $H(\epsilon,u)=G_{\epsilon}(u+z)-G_{0}(u+z)$.

By \eqref{E-fam-iter-T-ep} and \eqref{E-TF-rela2} in Remark \ref{Def-bundlemap}, it is clear that $A=DG_{0}(z)\,(=R_{0,(z,z)}^{(q)})$ is a compact linear operator that are {\it strongly positive with respect to $C_1$}. Then, $A$ admits an invariant Krein-Rutman decomposition $X=X_{1}\bigoplus X_{2}$, that is,
there exists a unit vector $v\gg_1 0$ and constants $M\ge 1$, $\beta\in(0,1)$ such that  $X_{1}={\rm span}\{v\}$, $X_{2}\cap C_1=\{0\}$ with
\begin{equation}\label{E:KR}
\|A_{2}^{n}\|\leq M\beta^{n},\quad A_{2}:=A|_{X_{2}}.
\end{equation}
Let $P:X\to X_1$ be the natural projection onto $X_1$ along $X_2$, and let $Q=I-P$. Write $u^1=Pu$, $u^2=Qu$. We have the following property (see Sublemma in \cite[p.1322]{PH93} with proof on \cite[p.1324]{PH93}):
\vskip 2mm

\noindent {\bf (P1)} There are $C_{2}>0$ and $\alpha\in(0,1)$ such that for any $u\in X$ and $R>0$, if $\|u^2\|\leq C_{2}\|u^1\|$ and $\|u\|\geq R$, then either $u\gg_1B_{\alpha R}$ or $u\ll_1B_{\alpha R}$.
\vskip 2mm

\noindent Here, $B_{\alpha R}$ denotes the open ball of radius $\alpha R$ centered at $0$.
\vskip 2mm

Now, Choose a small number $C_3>0$ that
\begin{equation}\label{E:defn-C3}
(1+C_{2}^{-1})C_{3}M\|Q\|\sum\limits_{k=0}^{\infty}\beta^{k}<\frac{1}{3}.
\end{equation}
Recall that $G_{0}$ is $C^{1}$. Then for such $C_3>0$, there exists a $R>0$ so small that
\begin{equation}\label{E:defn-r}
\|g(u)\|\leq\int_0^1{\|DG_{0}(z+su)-DG_{0}(z)\|\cdot \|u\|ds}\leq C_{3}\|u\|,
\end{equation}
for any $\|u\|<R$. Moreover, by letting $R>0$ smaller (if necessary), we also have
\begin{equation}\label{E:Rsmall-V-r}
z+B_{R}\triangleq\{z+u:\norm{u}\le R\}\subset V,
\end{equation}
and there exists $\tilde{M}>0$ such that
\begin{equation*}\label{E:Rsmall-Gep-bound}
\norm{\frac{\partial G(\epsilon,z+u)}{\partial\epsilon}}<\tilde{M},\,\text{ for any } \epsilon\in[-\epsilon_{0},\epsilon_{0}]\ \text{and } \norm{u}\le R.
\end{equation*}
Then, we have $h(\epsilon)\triangleq\sup\limits_{\|u\|\leq R}H(\epsilon,u)\leq\epsilon\tilde{M}\to0$, as $\epsilon\to 0$. Choose $\delta_{1}>0$ such that
\begin{equation}\label{E:defn-delta0}
h(\epsilon)M\|Q\|\sum\limits_{k=0}^{\infty}\beta^{k}<\frac{\bar{R}}{3}
\end{equation}
for any $\epsilon\in[-\delta_{1},\delta_{1}]$, where $\bar{R}\triangleq(1+C_{2}^{-1})^{-1}R$.
Let
$$0<\rho<\min\{\alpha R, \left[3M\|Q\|(1+C_{2}^{-1})\right]^{-1}R\}\triangleq \min\{\alpha R, (3M\|Q\|)^{-1}\bar{R}\}.$$
Fix such $\rho>0$ and $\delta_1>0$, in order to prove (i)-(ii) of this lemma, it suffices to show the following {\it assertion: If $\epsilon\in[-\delta_{1},\delta_{1}]$ and $\|u_{0}\|<\rho$, then either}
\vskip 2mm
(i)$^{\prime}$\emph{ $\|u_{n}\|<R$ for any $n\geq0$; or,}

\vskip 2mm
(ii)$^{\prime}$\emph{ There exists an integer $n\geq1$ such that $\|u_{n}\|\geq R$ and $\|u_{n}^{2}\|\leq C_{2}\|u_{n}^{1}\|$.}
\vskip 2mm

\noindent Indeed, if (i)$^{\prime}$ holds, then \eqref{E:Rsmall-V-r} clearly yields that $y_{n}\in V$ (hence, (i) holds). On the other hand, if (ii)$^{\prime}$ holds, then property (P1) implies that $u_{n}\gg_1B_{\alpha R}$ or $u_{n}\ll_1B_{\alpha R}$. Noticing that $\|u_{0}\|<\rho<\alpha R$, one has $u_{n}\gg_1u_{0}$ or $u_{n}\ll_1u_{0}$ (hence, (ii) holds).
\vskip 2mm

Therefore, in the following, we will show the assertion. Suppose that (ii)$^{\prime}$ dose not hold. Then, one has $\|u_{n}\|<R$ or $\|u_{n}^{1}\|< C_{2}^{-1}\|u_{n}^{2}\|$, for all $n\geq1$. In such circumstance, we will prove (i)$^{\prime}$. To this purpose, one only needs to show
\begin{equation}\label{E:Q-dire-bdd}
\|u_{n}^{2}\|<\bar{R},\quad \text{ for any } n\ge 0.
\end{equation}
In fact, if $\|u_n^{2}\|<\bar{R}$ with $\|u_n^{1}\|<C_{2}^{-1}\|u_n^{2}\|$, then $\|u_n\|\leq\|u_n^{1}\|+\|u_n^{2}\|< R$. Thus, we have proved (i)$^{\prime}$.

\vskip 2mm
So, it remains to prove \eqref{E:Q-dire-bdd}. To this end, we will prove it by induction.
Clearly, $\|u_{0}^{2}\|\leq \|Q\|\cdot\|u_{0}\|<\rho\|Q\|<(3M)^{-1}\bar{R}<\bar{R}$, which means that
\eqref{E:Q-dire-bdd} holds for $n=0$. Suppose that \eqref{E:Q-dire-bdd} holds for $k=0,1,\cdots,n-1$. Return to the iteration \eqref{E:un+1-un}, we apply the projection $Q$ to \eqref{E:un+1-un} and obtain $$u_{n+1}^{2}=A_{2}u_{n}^{2}+Q[g(u_{n})+H(\epsilon,u_{n})].$$ Then the following ``variation-of-constants'' formula holds
\begin{equation}\label{E:var-of-cons}
u_{n}^{2}=A_{2}^{n}u_{0}^{2}+\sum\limits_{k=0}^{n-1}A_{2}^{n-k-1}Q[g(u_{k})+H(\epsilon,u_{k})].
\end{equation}
Since $\|u_{k}^{2}\|<\bar{R}$ for $k\le n-1$ and (ii)$^{\prime}$ dose not hold. Then one has $\|u_{k}\|<R$, for $k\le n-1$.
Therefore,
\begin{alignat*}{2}\label{4.8}
\|u_{n}^{2}\|&\quad\overset{\eqref{E:var-of-cons}+\eqref{E:KR}}{\leq} M\beta^{n}\|u_{0}^{2}\|+\sum\limits_{k=0}^{n-1}M\beta^{n-k-1}\|Q\|(\|g(u_{k})\|+\|H(\epsilon,u_{k})\|)\\
&\quad\,\overset{\eqref{E:defn-r}+\eqref{E:defn-delta0}}{\leq} M\|u_{0}^{2}\|+M\|Q\|C_3R\sum\limits_{k=0}^{\infty}\beta^{k}+\frac{\bar{R}}{3}\\
&\,\quad\quad\overset{\eqref{E:defn-C3}}{\leq} M\|u_{0}^{2}\|+\frac{2\bar{R}}{3}.
\end{alignat*}
Recall that $\|u_{0}^{2}\|<(3M)^{-1}\bar{R}$. Then we have $\|u_{n}^{2}\|<\bar{R}$. Thus, we have proved \eqref{E:Q-dire-bdd}, which completes the proof.
\end{proof}

By virtue of Lemma \ref{L:weak-da}, we have the following lemma.

\begin{lem}\label{P:localization}
Let $z$ be as in Lemma \ref{L:exis-LS}. Then there exists a subsequence $\{z_{n_j}\}$ such that $$\textnormal{dist}(O(z_{n_{j}},G_{n_{j}}),O(z,G_{0}))\to0.$$
\end{lem}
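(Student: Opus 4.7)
The plan is to exploit Lemma \ref{L:weak-da} by ruling out its alternative (ii) for periodic orbits, so that the orbits $O(z_n,G_n)$ are forced into arbitrarily small neighborhoods of $O(z,G_0)$ along a subsequence. Throughout I will use that $z\in\Lambda$ (by Lemma \ref{L:exis-LS}), and that $\mathrm{dist}(O(z_n,G_n),\Lambda)\to 0$.

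First, since $z$ belongs to $\Lambda=\bigcap_{j}\overline{\bigcup_{n\ge j}O(z_n,G_n)}$, there is an initial subsequence (which I shall relabel as the full sequence for brevity) together with points $w_n\in O(z_n,G_n)$ satisfying $w_n\to z$. Note that each $w_n$ lies on the same cycle as $z_n$ and hence is itself a linearly stable periodic point of $G_n$ contained in $\overline{D}_1$; I may therefore replace $z_n$ by $w_n$ without loss of generality and assume outright that $z_n\to z$.

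Next, fix an arbitrary neighborhood $V$ of $O(z,G_0)$. Lemma \ref{L:weak-da} yields constants $\rho>0$ and $\delta_1>0$ such that for all large $n$ (so that $|\epsilon_n|\le\delta_1$ and $\|z_n-z\|<\rho$), either (i) $O(z_n,G_n)\subset V$, or (ii) $G_n^{r+k}z_n\gg_1 G_n^r z_n$ (or the reverse) for some $r,k\ge 1$. The key step is to show that (ii) is impossible. Indeed, by Proposition \ref{P:per-even-mono}, $G_n=F_{\epsilon_n}^q$ is strongly monotone with respect to $C_1$ on $D$, and $O(z_n,G_n)\subset\overline{D}_1\subset D$. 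Applying $G_n^k$ iteratively to the inequality $G_n^{r+k}z_n\gg_1 G_n^r z_n$ would therefore produce a strictly $\ll_1$-increasing infinite chain $G_n^{r}z_n\ll_1 G_n^{r+k}z_n\ll_1 G_n^{r+2k}z_n\ll_1\cdots$, contradicting the periodicity of $z_n$ (and analogously in the reversed case). Hence alternative (i) must hold, giving $O(z_n,G_n)\subset V$ for all large $n$.

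Finally, I will carry out a diagonalization to produce a single subsequence along which the distance tends to zero. Choose a decreasing sequence of open neighborhoods $V_m$ of $O(z,G_0)$ with $\mathrm{diam}(V_m\setminus O(z,G_0))\to 0$ (for instance, the $1/m$-neighborhoods). For each $m$, the preceding paragraph provides an index $N_m$ such that $O(z_n,G_n)\subset V_m$ whenever $n\ge N_m$. Selecting $n_j:=\max\{N_j,n_{j-1}+1\}$ yields a subsequence with $O(z_{n_j},G_{n_j})\subset V_j$, so $\mathrm{dist}(O(z_{n_j},G_{n_j}),O(z,G_0))\to 0$, as required. The main obstacle in this plan is ruling out alternative (ii); all other steps are straightforward extraction arguments, and the obstacle is overcome precisely because $G_\epsilon$ inherits strong monotonicity with respect to $C_1$ from Proposition \ref{P:per-even-mono}, even though the original perturbations $F_\epsilon$ need not be monotone.
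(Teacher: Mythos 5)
Your proposal is correct and follows essentially the same route as the paper: pick points of the cycles $O(z_n,G_n)$ converging to $z$ via the definition of $\Lambda$, invoke Lemma \ref{L:weak-da}, and rule out alternative (ii) using the strong monotonicity of $G_{\epsilon}=F_{\epsilon}^q$ with respect to $C_1$ from Proposition \ref{P:per-even-mono}. The only cosmetic difference is that the paper excludes (ii) by citing Tak\'{a}\v{c}'s result that periodic orbits of strongly monotone maps are unordered, whereas you reprove that fact inline with the increasing-chain argument (and you spell out the final diagonal extraction, which the paper leaves implicit); both are fine.
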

\begin{proof}
By the definition of $\Lambda$, there exists a subsequence $z_{n_{j}}$ and a sequence $m_{j}$ such that $\|G_{n_{j}}^{m_{j}}z_{n_{j}}-z\|\to0$. Hence, for any neighbourhood $V$ of $O(z,G_{0})$, there exists some $N>0$, such that $\|G_{n_{j}}^{m_{j}}z_{n_{j}}-z\|<\rho$ and $\epsilon_{n_{j}}\in[-\delta_{1},\delta_{1}]$ for any $j\geq N$ ($\rho$ and $\delta_{1}$ are from Lemma \ref{L:weak-da}).

Since $O(z_{n_{j}},G_{n_{j}})\subset\overline{D}_{1}\subset D$ and $G_{n_{j}}:D\to D$ is strongly monotone with respect to $C_{1}$,  one has $O(z_{n_{j}},G_{n_{j}})$ is unordered with respect to $<_1$ (see e.g., Tak\'{a}\v{c} \cite[Proposition 2.2]{Ta92}). This contradicts the statement in Lemma \ref{L:weak-da}(ii). Therefore, only Lemma \ref{L:weak-da}(i) holds; and hence, $O(z_{n_{j}},F_{n_{j}})\subset V$, for any $j\geq N$. We have completed the proof.
\end{proof}

The following lemma on ``local bifurcation" asserts that for $|\epsilon|$ sufficiently small, the stable periods of $G_{\epsilon}$ in the neighbourhood of a linearly stable $k$-periodic orbit of $G_{0}$, can not exceed $k$. Such lemma is straightforward adopted from \cite[Proposition 4.3]{PH93}.

\begin{lem}\label{P:local-birfu}
Let $z\in D$ be a linearly stable $k$-periodic point of $G_{0}$. Then there exists a neighborhood $V$ of $O(z,G_{0})$ and $\delta_{2}>0$ such that, for any $\epsilon\in(-\delta_{2},\delta_{2})$ all cycles of $G_{\epsilon}$ contained in $V$ have the minimal period $k$.
\end{lem}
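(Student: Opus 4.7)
The plan is to reduce the statement to the case of a linearly stable \emph{fixed} point by working with $H_\ep:=G_\ep^k$, and then combine the strong $C_1$-monotonicity of $G_\ep$ on $D$ with the Krein--Rutman structure of $DH_0(z)$ that has already been exploited in the proof of Lemma \ref{L:weak-da}.

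First I would observe that $z$ is a linearly stable fixed point of $H_0=G_0^{k}$, and that every $G_\ep$-cycle of minimal period $jk$ lying in a sufficiently small neighbourhood of $O(z,G_0)$ splits into $k$ disjoint $H_\ep$-cycles of minimal period $j$, each contained in a small neighbourhood of one of the points $G_0^iz$, $i=0,\dots,k-1$. Thus it suffices to prove the fixed-point version: for every neighbourhood $W$ of $z$ there exist $\delta_2>0$ and $\rho>0$ such that, for $|\ep|<\delta_2$ and every $H_\ep$-periodic point $y$ with $\|y-z\|<\rho$, $y$ is in fact a fixed point of $H_\ep$. By continuity in $\ep$, this also rules out any $G_\ep$-cycle near $O(z,G_0)$ of minimal period properly dividing $k$, since such a cycle would persist to a $G_0$-cycle of minimal period $<k$ through $z$.

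Next I would apply Lemma \ref{L:weak-da} to $H_\ep$ at its linearly stable fixed point $z$ (the case $k=1$ of that lemma): there exist $\rho>0$ and $\delta_2>0$ such that for $|\ep|<\delta_2$ and $\|y-z\|<\rho$, either $O(y,H_\ep)\subset W$, or some two iterates of $y$ are strictly comparable with respect to $<_1$. If in addition $y$ is $H_\ep$-periodic, then $O(y,H_\ep)$ is a cycle of $H_\ep$; since $H_\ep=G_\ep^k$ is strongly monotone with respect to $C_1$ on $D$ (by Proposition \ref{P:per-even-mono} together with Remark \ref{Def-bundlemap}), this cycle is unordered with respect to $<_1$ by Tak\'a\v{c} \cite{Ta92}. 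Thus the second alternative is impossible and every such periodic orbit is entirely contained in $W$.

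Finally, to exclude $H_\ep$-cycles of length $\ge 2$ inside a sufficiently small $W$, I would invoke the Krein--Rutman decomposition of $A:=DH_0(z)$, which is compact and strongly positive with respect to $C_1$: there is a simple dominant eigenvalue $\lambda_0\in(0,1]$ with eigenvector $v\gg_1 0$ and a splitting $X=X_1\oplus X_2$, $X_1=\mathrm{span}\{v\}$, with $\|A_2^n\|\le M\beta^n$ for some $\beta<1$. If $\lambda_0<1$, then $H_0$ is a strict local contraction at $z$, and by $C^1$-closeness so is $H_\ep$ for $|\ep|$ small; the Banach fixed-point argument then yields a unique $H_\ep$-fixed point near $z$ and forbids larger cycles. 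If $\lambda_0=1$, a $C^1$ centre manifold $\mathcal{W}^c_\ep$, tangent to $X_1$ and capturing all recurrent dynamics of $H_\ep$ near $z$, is one-dimensional and, being tangent to the strongly positive direction $v$, is totally ordered by $<_1$ in a neighbourhood of $z$; the induced monotone one-dimensional map on $\mathcal{W}^c_\ep$ then admits only fixed points, contradicting the existence of an unordered $H_\ep$-cycle of length $\ge 2$ in $W$. The main obstacle is precisely this resonant case $\lambda_0=1$, where the $C^{1,\alpha}$-based Pesin-type stable manifold argument of \cite{PH93} must be replaced by a $C^1$ centre manifold construction together with the one-dimensional unordered-cycle argument just outlined; once this is in place, the conclusion follows by taking $V$ to be the union of the neighbourhoods $W$ attached to each $G_0^iz$.
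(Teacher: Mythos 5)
Your argument is essentially correct, but it takes a genuinely different route from the paper, whose ``proof'' of this lemma is simply a citation of Hess--Pol\'a\v{c}ik \cite[Proposition 4.3]{PH93}. The argument being invoked there (and the reason the paper can call it ``straightforward adopted'': it reuses exactly the $C^1$ estimates, the Krein--Rutman splitting $X=X_1\oplus X_2$ and the cone property (P1) already assembled in the proof of Lemma \ref{L:weak-da}, with $C$ replaced by $C_1$) is elementary: after the same reduction to a fixed point $z$ of $H_0=G_0^k$, one considers the differences $d_n=H_\epsilon^{n+1}y-H_\epsilon^{n}y$ of consecutive points of a putative cycle, writes $d_{n+1}=(A+E_n)d_n$ with $A=DH_0(z)$ and $\|E_n\|$ small by $C^1$-continuity, and argues that either $\|d_n^{1}\|\le C_2^{-1}\|d_n^{2}\|$ for all $n$ --- in which case $d_n\to0$ and hence $d_n\equiv0$ by periodicity --- or at some step $\|d_n^{2}\|\le C_2\|d_n^{1}\|$, so that (P1) gives $d_n\gg_1 0$ or $d_n\ll_1 0$, contradicting the $<_1$-unorderedness of cycles of the $C_1$-strongly monotone map $H_\epsilon$. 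This disposes of the critical case $\lambda_0=1$ with no invariant-manifold machinery. Your route replaces that step by a parametrized $C^1$ centre-manifold reduction; it can be made to work (the spectral gap is automatic, the curve is totally ordered because it is tangent to $v\gg_1 0$, and an increasing map of a totally ordered arc has no nontrivial cycles), but this is precisely the step you leave as a sketch, and it imports nontrivial machinery (existence, $C^1$-smoothness, and the reduction principle for centre manifolds of non-invertible maps in Banach spaces with parameters) that the elementary difference estimate avoids --- which matters here, since avoiding Pesin-type invariant-manifold constructions at $C^1$ regularity is the whole point of the paper. Two smaller remarks: your exclusion of wrong minimal periods is phrased only for multiples and proper divisors of $k$, whereas one must also exclude periods $p$ with $k\nmid p$ that are neither (the disjoint-ball itinerary argument, or your compactness argument run for arbitrary such $p$, handles this); and the appeal to Lemma \ref{L:weak-da} is dispensable, since containment of the cycle in $V$ is already part of the hypothesis.
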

\begin{proof}
See Hess and  Pol\'{a}\v{c}ik \cite[Proposition 4.3]{PH93}.
\end{proof}
\vskip 2mm

Now, we are ready to prove Proposition \ref{P:bdd-per-Fqeps}.
\vskip 2mm
\noindent
{\it Proof of Proposition \ref{P:bdd-per-Fqeps}.} We first show that all the stable periods of ${G_0|}_{\overline{D}_1}$ are bounded. Suppose not, there is a sequence $\{z_{n}\}$ of linearly stable periodic points of ${G_{0}|}_{\overline{D}_{1}}$ such that, the minimal period of $\{z_n\}$ tends to $+\infty$. Let $\Lambda$ be defined in \eqref{E:defn-Lambda}. Lemma \ref{L:exis-LS} entails that $\Lambda$ contains a linearly stable $k$-periodic point $z$ of $G_{0}$, for some integer $k\geq1$. Moreover, it follows from Lemma \ref{P:localization} that, there exists a subsequence $\{z_{n_{j}}\}$ such that
$$\textnormal{dist}(O(z_{n_{j}},G_{0}),O(z,G_{0}))\to 0.$$
Hence, by Lemma \ref{P:local-birfu} (with $\ep\equiv0$) there exists an integer $N>0$ such that, $z_{n_{j}}$ are all $k$-periodic point of ${G_{0}|}_{\overline{D}_{1}}$, for $j\geq N$, which contradicts the choice of $z_n$. So, all the stable periods of ${G_0|}_{\overline{D}_1}$ are bounded.

Next, fix an $m_1>0$ to be an upper bound of the stable periods of ${G_0|}_{\overline{D}_1}$.
we will show that the stable periods of ${G_{\epsilon}|}_{\overline{D}_{1}}$ are bounded above by $m_1$, for any $|\epsilon|$ sufficiently small. Suppose on the contrary that there exists a sequence $\{\hat{z}_n\}$ of linearly stable periodic points of ${G_{\epsilon_{n}}|}_{\overline{D}_{1}}$ with $\epsilon_{n}\to 0$, such that the minimal period of $\hat{z}_n$ is larger than $m_1$ for any $n\geq1$.
By repeating the exactly same arguments as in the paragraph above, we can obtain a subsequence $\{\hat{z}_{n_{j}}\}$ of $\{\hat{z}_n\}$ and a linearly stable cycle $O(\hat{z},G_0)$ of ${G_0|}_{\overline{D}_1}$
such that
$\textnormal{dist}(O(\hat{z}_{n_{j}},G_{\eps_{n_j}}),O(\hat{z},G_{0}))\to 0$ as $j\to \infty$.
Recall that the minimal period of $O(\hat{z},G_0)$ is bounded from above by $m_1$. Then, by Lemma \ref{P:local-birfu} again, one obtains that the minimal period of $O(\hat{z}_{n_{j}},G_{\eps_{n_j}})$ is bounded from above by $m_1$ for all $j$ sufficiently large, a contradiction. Thus, we have proved that all the stable periods of ${F_{\epsilon}^{q}|}_{\overline{D}_1}$ are bounded above by $m_1$, for any $|\epsilon|$ sufficiently small. The proof is completed.
\hfill $\square$

\vskip 3mm
\begin{rmk}\label{R:lack-hol-mono}
Our proof of Proposition \ref{P:bdd-per-Fqeps} is motivated by the approach in Hess and  Pol\'{a}\v{c}ik \cite[Section 4]{PH93}. However, in our framework, the lower $C^1$ (instead of $C^{1,\alpha}$)-regularity and lack of the one-to-one property of $F_0$, as well as the non-monotonicity of the perturbations $F_\ep$, make our approach more delicate and difficult. In order to overcome such a series of difficulties, we combined with the ideas and techniques developed in our previous works \cite{WY20-1,WY21} and in Tere\v{s}\v{c}\'{a}k \cite{T94}. Among others, the main novelty of our approach is to construct a bundle map $T$ over the Cartesian square $\Lambda\times \Lambda$ of the limit-set $\Lambda$ rather than $\Lambda$ itself. By utilizing the extended exponential separation (see \cite[Proposition 3.2 or Theorem A]{WY20-1}) on $\Lambda\times \Lambda$ with the alternative cone $C_1$, as well as certain delicate estimates (for example, \eqref{E:defn-C3}-\eqref{E:defn-r}), we accomplish our approach by proving the critical Lemmas \ref{L:conti-Lya}-\ref{L:weak-da}, which enables us to remove the additional assumptions (I)-(II) mentioned at the beginning of this section.
\end{rmk}

\vskip 3mm
Before ending this section, we specially point out that, motivated by the arguments in the second paragraph of the proof for Proposition \ref{P:bdd-per-Fqeps}, one may even obtain the following fact: Let
$m_{\ep,\overline{D}_1}\triangleq\sup\left\{k>0:\text{there exists } x\in \overline{D}_1\,\text{ such that }O(x,F_\ep)\,\text{is a linearly stable } k{\text{-cycle} }\right\}.$ Then
\begin{itemize}
\item {\it If $m_{0,\overline{D}_1}\leq m$ for some integer $m>0$, then $m_{\ep,\overline{D}_1}\leq m$ for all $|\epsilon|$ sufficiently small.}
\end{itemize}
As a matter of fact, for such $m$, one may choose large coprime integers $p_1,p_2$ such that $q_i=mp_i>q$, for $i=1,2$,. Here, $q>0$ is as in Proposition \ref{P:per-even-mono} and
Remarks \ref{Def-bundlemap}-\ref{R:Def-D1}. Since $m_{0,\overline{D}_1}\leq m$, the stable periods of $F^{q_1}_0|_{\overline{D}_1}$ and $F^{q_2}_0|_{\overline{D}_1}$ are both bounded by $1$. Then, by following the same arguments in the second paragraph of the proof for Proposition \ref{P:bdd-per-Fqeps} (taking $m_1=1$, and $G_\ep=F^{q_1}_\ep$ or $F^{q_2}_\ep$ there), one obtains that the stable periods of $F^{q_1}_\ep|_{\overline{D}_1}$ and $F^{q_2}_\ep|_{\overline{D}_1}$ are both bounded by 1, for any $|\epsilon|$ sufficiently small. This implies that $m_{\ep,\overline{D}_1}\leq (q_1,q_2)=m$, where $(q_1,q_2)$ is the greatest common factor of $q_1$ and $q_2$.

\section{Proofs of Theorem A, Corollary B and Corollary D}\label{S:proof-ThAB-CorD}
In this section, we will utilize our results obtained in previous sections to prove other main results mentioned in the introduction.

\vskip 4mm
\noindent
{\it Proof of Theorem A.} Let $F_{\epsilon}\equiv F_{0}$ in Corollary \ref{P:bdd-per-mono}. Then there exists an integer $m>0$ such that the stable periods of ${F_{0}|}_{A}$ (since $A\subset D_1$) are bounded above by $m$. Thus, together with Proposition \ref{P:da}, we obtain Theorem A immediately. \hfill $\square$

\vskip 4mm
\noindent
{\it Proof of Corollary B.} Let the integer $m>0$ be obtained in Theorem A. Take any integer $i>0$, and let $B_i$ be the open ball centered at the origin with radius $i$. Define
 $$Q_i=\{x\in B_i:\omega(x,F_{0})\text{ is a linearly stable cycle with minimal period at most }m\}.$$
Since $F_{0}$ is compact and pointwise dissipative, the orbit set $O(B_i,F_0)$ is bounded (see \cite[Theorem 2.4.7]{Ha88}). Consequently, together with Theorem A, we can repeat the exactly same arguments in Pol\'{a}\v{c}ik and Tere\v{s}\v{c}\'{a}k \cite[Section 5]{PT92} to obtain that $Q_i$ contains an open dense subset of $B_i$. Now let
$$Q_{0}=\{x\in X:\omega(x,F_{0})\text{ is a linearly stable cycle with minimal period at most }m\}.$$
Note that $X=\mathop{\bigcup}\limits_{i\geq 1}B_i$. Then $Q_0=\mathop{\bigcup}\limits_{i\geq 1}Q_i$ contains an open dense subset of $X$. We have completed the proof.\hfill $\square$

\vskip 4mm
\noindent
{\it Proof of Corollary D.} Let the open bounded subset $D_1$  and the integer $q>0$ be in Theorem \ref{T:im-da-perturb}.
For each $|\epsilon|$ sufficiently small, we define
$$\tilde{Q}_{\epsilon}:=\{x\in D_{1}:\omega(x,F_{\epsilon}^{q})\text{ is a linearly stable cycle with minimal period at most }m_1\},$$
where $m_1$ is from Theorem \ref{T:im-da-perturb}(a). By the $C^1$-robustness for sharpened dynamics alternative in Theorem \ref{T:im-da-perturb}, we can repeat the exactly same arguments in Pol\'{a}\v{c}ik and Tere\v{s}\v{c}\'{a}k \cite[Section 5]{PT92} (with $F$ replaced by $F_\ep^q$ there) to obtain that $\tilde{Q}_{\epsilon}$ contains an open dense subset of $D_{1}$, for any $|\epsilon|$ sufficiently small.

Now, we define
$$Q_{\epsilon}:=\{x\in D_{1}:\omega(x,F_{\epsilon})\text{ is a linearly stable cycle with minimal period at most }m\},$$
where $m=m_1q$. On one hand, it is clear that if $\omega(x,F_{\epsilon})$ is a linearly stable cycle, then $\omega(x,F_{\epsilon}^{q})$ is a linearly stable cycle.
Then, Proposition \ref{P:bdd-per-Fqeps} entails that, $Q_{\epsilon}\subset\tilde{Q}_{\epsilon}$. On the other hand, it follows from the claim in the proof of Corollary \ref{P:bdd-per-mono} that $\tilde{Q}_{\epsilon}\subset Q_{\epsilon}$. Thus, we have proved $Q_{\epsilon}=\tilde{Q}_{\epsilon}$, which completes the proof.\hfill $\square$

\section{An example}\label{S:exam}
To illustrate our abstract results, we present in this section an example of a nonlocal perturbation of a time-periodic parabolic equation. For such non-locally perturbed system, we will show the global dynamics of the improved generic convergence to cycles whose minimal periods are uniformly bounded.
\vskip 2mm

Consider the following nonlocal perturbations of a time-periodic parabolic equation:
\begin{eqnarray}\label{E2}
        \frac{\partial u}{\partial t} &=& \Delta u+f(t,x,u,\nabla u)+\epsilon^2 C(t,x)\int_{\Omega}{p(x)u(t,x)dx}, \quad\ x\in \Omega,\ t>0,\notag\\
       \frac{\partial u}{\partial \nu} &=& 0, \quad\quad\quad\quad\quad\quad\quad\quad\quad\quad\quad\quad\quad\quad\quad\quad\quad\quad\quad\ \ \,x\in \partial\Omega,\ t>0, \\
       u(0,x) &=& u_{0}(x), \quad\quad\quad\quad\quad\quad\quad\quad\quad\quad\quad\quad\quad\quad\quad\quad\quad\ \,\,\,\,x\in\Omega\notag,
\end{eqnarray}
where $\Omega\subset\mathbb{R}^{N}$ ($N\geq1$) is a smooth bounded domain, $\ep\in \mathbb{R}$ a parameter and $\nu$ is the unit outward normal vector field on $\partial\Omega$.
The nonlinearity $f:\mathbb{R}\times\bar{\Omega}\times\mathbb{R}\times{\mathbb{R}^{N}}\to\mathbb{R};(t,x,u,\xi)\mapsto f(t,x,u,\xi)$ is assumed to be of class $C^1$ and $\tau$-periodic in $t$.
The functions $C(t,x),p(x)$ in nonlocal perturbation terms are assumed to be $C^1$-functions, and $C(t,x)$ is $\tau$-periodic in $t$.

Let $Y=L^{p}(\Omega)(N<p<\infty)$. For each $\alpha\in(\frac{1}{2}+\frac{N}{2p},1)$, let $X=Y^{\alpha}$ be the fractional power space associated with $L^p$-realization of $-\Delta$ and the boundary conditions. Then $X\hookrightarrow C^{1+\gamma}(\bar{\Omega})$ with continuous inclusion for $\gamma\in[0,2\alpha-\frac{N}{p}-1)$.
So, $(X,X_+)$ is a strongly ordered Banach space with the solid cone $X_+$ consisting of all nonnegative functions in $X$.

For any $u_0\in X$, equation \eqref{E2} admits a (locally) unique regular solution $u(t,\cdot,u_{0})$ in $X$. Under appropriate growth condition (see e.g., Amann \cite{Amann85}), for any $u_{0}\in X$, \eqref{E2} has a unique global solution $t\mapsto u(t,\cdot,\epsilon,u_{0})$ satisfying $u(0,\cdot,\epsilon,u_{0})=u_{0}(\cdot)$. All the smoothness and compactness required for the period map $F_{\epsilon}: u_{0}\mapsto u(\tau,\cdot,\epsilon,u_{0})$ in (H1)-(H3) are satisfied (see, e.g. \cite{PH93,PT93}).
In particular, for $\ep=0$, $F_{0}$ is pointwise dissipative with an attractor $A$ (see, e.g. \cite{Ha88} or \cite{P02}).

\vskip 2mm
If the $C^1$-functions $C(t,x),p(x)$ in the nonlocal term are nonnegative, then system \emph{\eqref{E2}} can be shown to admit a strong comparison principle (c.f. Hess and Pol\'{a}\v{c}ik \cite[Example 3]{PH93}, or Pol\'{a}\v{c}ik and Tere\v{s}\v{c}\'{a}k \cite[Section 4]{PT93}); and hence, it belongs the class of $C^1$-smooth strongly monotone dynamical systems. Under such circumstance, Corollary B (by taking $F_\ep$ as $F_0$) entails that, for each $\ep$, {\it the dynamics of \emph{\eqref{E2}} remains the generic convergence to cycles with stable periods bounded.}

If $C(t,x),p(x)$ are not nonnegative and $\ep$ is not small, then we note that stable complicated dynamics may occur (see, e.g. Fiedler and Pol\'{a}\v{c}ik \cite{FP91} and references therein).
On the other hand, for all $\ep$ sufficiently small, equation \eqref{E2} can be viewed as an $\ep$-perturbed system. So, by virtue of Corollary D (with $F_\ep$ not necessarily monotone), we can obtain the improved generic convergence for the $\ep$-perturbed system \eqref{E2}, that is,  {\it there is an open neighbourhood $D_{1}$ of $A$ and an integer $m>0$ such that, for any $\ep$ sufficiently small, the set of initial condition $u_0\in D_1$ whose solution $u(t,\cdot,\epsilon,u_{0})$ converges to a linearly stable $k\tau$-periodic solution ($0<k\leq m$), contains an open dense subset of $D_1$.}

Finally, we would like to mention the case that $f$ is independent of $t$. In such situation, for $\ep=0$, $F_0$ can be chosen as the time-$\tau$ map of the unperturbed $C^1$-strongly monotone semiflow generated by $f$. So, all linearly stable periodic points of $F_0$ are fixed points (see \cite[Proposition 9.4]{Hess91}). Due to the fact mentioned at the end of Section \ref{S:bdd-per-Fqeps}, one can find an open neighbourhood $D_{1}$ of $A$ such that, for all $\ep$ sufficiently small, the set of initial condition $u_0\in D_1$ whose solution $u(t,\cdot,\epsilon,u_{0})$ converges to a linearly stable $\tau$-periodic solution, contains an open dense subset of $D_1$.

\end{document}